\newtheorem{theorem}{Theorem}[section]
\newtheorem{lemma}{Lemma}[section]
\newtheorem{proposition}{Proposition}[section]
\newtheorem{definition}{Definition}[section]
\newtheorem{remark}{Remark}[section]
\numberwithin{equation}{section}
\def\essinf{\mathop{\rm ess~inf}}
\def\R{\mathbb{R}}
\def\Z{\mathbb{Z}}
\def\pa{\partial}
\begin{document}

\title[Transition fronts of two species competition lattice systems in random media]{Transition fronts of two species competition lattice systems in random media}

\author{Feng Cao}
\address{Department of Mathematics, Nanjing University of Aeronautics and Astronautics, Nanjing, Jiangsu 210016, P. R. China}
\email{fcao@nuaa.edu.cn}
\thanks{Research of F. Cao was supported by NSF of China No. 11871273, and the Fundamental Research Funds for the Central Universities No. NS2018047.}

\author{Lu Gao}
\address{Department of Mathematics, Nanjing University of Aeronautics and Astronautics, Nanjing, Jiangsu 210016, P. R. China}
\email{gaolunuaa@163.com}

\subjclass[2010]{35C07, 34K05, 34A34, 34K60}



\keywords{transition fronts, competition systems, lattice systems, random media}

\begin{abstract}
The current paper is devoted to the study of existence and non-existence of transition fronts for two species competition lattice system in random media, and explore
the influence of randomness of the media on the wave profiles and wave speeds of such transition fronts. We first establish comparison principle for sub-solutions and super-solutions of the related cooperative system. Next, under some proper assumptions, we construct appropriate sub-solutions and super-solutions for the cooperative system. Finally, we show that random transition fronts exist if their least mean speed is greater than an explicit threshold and there is no random transition front with least mean speed less than the threshold.
\end{abstract}

\maketitle



\section{Introduction}

The current paper studies the existence of transition fronts of the following two species competition lattice random system
\begin{equation}\label{main-eqn}
\left\{
\begin{aligned}
\dot{u}_i(t) &=u_{i+1}( t ) -2u_i( t) +u_{i-1}( t) +u_i( t)(a_1(\theta _t\omega)- b_1(\theta _t\omega)u_i( t)-c_1(\theta _t\omega)v_i( t)) ,\\
\dot{v}_i(t) &=v_{i+1}( t ) -2v_i( t) +v_{i-1}( t) +v_i( t)(a_2(\theta _t\omega)- b_2(\theta _t\omega)u_i( t)-c_2(\theta _t\omega)v_i( t)),
\end{aligned}
\right.
\end{equation}
where $ i\in \mathbb{Z}$, $t\in\mathbb{R}$, $\omega\in\Omega$, $(\Omega,\mathcal{F},\mathbb{P})$ is a given probability space, $\theta_t$ is an ergodic metric dynamical system on $\Omega$, $a_i( \cdot ) :\Omega\rightarrow\mathbb{R}$, $b_i( \cdot ) :\Omega\rightarrow(0,\infty)$, $c_i( \cdot ) :\Omega\rightarrow(0,\infty)$ $(i=1,2)$ are measurable, and for every $\omega\in\Omega$, $a^\omega_i(t):=a_i(\theta_t\omega)$, $b^\omega_i(t):=b_i(\theta_t\omega)$, $c^\omega_i(t):=c_i(\theta_t\omega)$ $(i=1,2)$ are locally H\"older continuous in $t\in\R$. Moreover, we assume $b_i(\theta_t\omega)>0$, $c_i(\theta_t\omega)>0$ $(i=1,2)$ for every $\omega\in\Omega$ and $t\in\R$.

System \eqref{main-eqn} is a spatial-discrete counterpart of the following two species competition system with random dispersal,
\begin{equation}\label{continuous-main-eqn} 
\left\{
\begin{aligned}
\partial_tu &=u_{xx}+u(a_1(\theta _t\omega)- b_1(\theta _t\omega)u-c_1(\theta _t\omega)v) ,\\
\partial_tv &=v_{xx}+v(a_2(\theta _t\omega)- b_2(\theta _t\omega)u-c_2(\theta _t\omega)v),
\end{aligned}
\right.
\end{equation}
Systems \eqref{main-eqn} and \eqref{continuous-main-eqn} are widely used to model the population dynamics of competitive species when the movement
or internal dispersal of the organisms occurs between non-adjacent and adjacent locations, respectively (see, for example, \cite{Ch03,MaPa03,ShKa97, ShSw90}). Note that system  \eqref{continuous-main-eqn} often models the evolution of population densities of competitive species in which the
internal interaction or movement of the organisms occurs randomly between adjacent
spatial locations and is described by the differential operator, referred to as the {\it random dispersal operator}. System \eqref{main-eqn} arises in modeling the evolution
of population densities of competitive species in which the internal interaction or
movement of the organisms occurs between non-adjacent spatial locations and is
described by the difference operator, referred to as the {\it discrete dispersal operator}.

In \eqref{main-eqn} and \eqref{continuous-main-eqn}, the functions $a_1$, $a_2$ represent the respective growth rates of the two species, $b_1$, $c_2$ account for self-regulation of the respective species, and $c_1$, $b_2$ account for competition between the two species. Two of the central dynamical issues about \eqref{main-eqn} and \eqref{continuous-main-eqn} are spatial spreading speeds
and traveling wave solutions.  A huge amount of research has been carried out toward the spatial spreading speeds and traveling wave solutions of system \eqref{continuous-main-eqn} in spatially and temporally homogeneous media (see, for example, \cite{CoGa84,Du83,GuLi11,Ho98,Hu10,Ka95,Ka97,LeLiWe02,LiWeLe05,LiLiRu06,LiZh07,LiZh10,WeLeLi02}) or spatially and/or temporally periodic media (see, for example, \cite{BaWa13,FaYuZh17,YuZh17,ZhRu11}). Recently, Bao, Li, Shen and Wang in \cite{BaLiShWa18} studied the spatial spreading speeds and linear determinacy of diffusive cooperative/competitive system in time recurrent environments. Bao in \cite{Ba18} studied the spatial spreading speeds and generalized traveling waves of competition system in general time heterogeneous media. 

As for the lattice system arising in competition models, to the best of our knowledge, there are only a few works on the related topics. The reader is referred to \cite{GuWu11,GuWu12} for the  study on the spatial spreading speeds and traveling wave solutions for competition lattice system in time independent habitats.
We note that Cao and Gao in \cite{CaGa19} studied the existence and stability of random transition fronts for KPP-type one species lattice random equations.  The reader is referred to \cite{CaSh17,CaoShen17,GuHa06,MaZh08,Sh09,WuZo97,ZiHaHu93} for the study on the spatial spreading speeds and traveling wave solutions for KPP-type one species lattice equations in homogeneous or periodic or time heterogeneous media.

The purpose of our current paper is to study the traveling wave solutions of two species competition lattice system with general time dependence. Since in nature, many systems are subject to irregular influences arisen from various kind of noise, it is of great importance to take the randomness of the environment into account and study the existence and non-existence of random transition fronts of competition lattice system in random media. Due to the lack of space regularity, we need finding new approach to get the existence of transition fronts when dealing with spatial-discrete system \eqref{main-eqn}. We point out that the method used here can also be used to get the existence and non-existence of transition fronts for two species competition lattice system in general time dependent habitats.

Let
$$l^\infty(\Z)=\{u=\{u_i\}_{i\in \Z}:\sup \limits_{i \in\Z}|u_i|<\infty\}$$
with norm $\|u\|=\|u\|_\infty=\sup_{i\in\Z}|u_i|$, and
$$l^{\infty,+}(\Z)=\{u\in l^\infty(\Z): \inf\limits_{i\in\Z}u_i\geq 0\}.$$
For $u,v\in l^\infty(\Z)$, we define
$$
u\ge v \quad {\rm if}\quad u-v\in l^{\infty,+}(\Z).
$$
Then for any given $(u_0, v_0 )\in l^\infty(\Z)\times l^\infty(\Z)$ , \eqref{main-eqn} has a unique (local) solution
$(u(t;u_0,v_0,\omega)$, $v(t;u_0,v_0,\omega))$ $=\{(u_i(t;u_0,v_0,\omega),v_i(t;u_0,v_0,\omega))\}_{i\in\Z}$ with $(u(0;u_0,v_0,\omega),v(0;u_0,v_0,\omega))=(u_0,v_0)$. Note that, if $u_0\in l^{\infty,+}(\Z)$, $v_0\in l^{\infty,+}(\Z)$, then $(u(t;u_0,v_0,\omega),v(t;u_0,v_0,\omega))$ exists for all $t\geq0$ and $(u(t;u_0,v_0,\omega),v(t;u_0,v_0,\omega))\in l^{\infty,+}(\Z)\times l^{\infty,+}(\Z)$ for all $t\geq 0$. A solution $(u(t;\omega),v(t;\omega))=\{u_i(t;\omega),v_i(t;\omega)\}_{i\in\Z}$ of \eqref{main-eqn} is called {\it spatially homogeneous} if $u_i(t)=u_j(t)$ and $v_i(t)=v_j(t)$ for all $i,j\in\Z$.

Note that \eqref{main-eqn} contains the following two sub-systems,
\begin{equation}\label{single specie 1}
\dot{u}_i(t) =u_{i+1}( t ) -2u_i( t) +u_{i-1}( t) +u_i( t)(a_1(\theta _t\omega)- b_1(\theta _t\omega)u_i( t)),
\end{equation}
and
\begin{equation}\label{single specie 2}
\dot{v}_i(t) =v_{i+1}( t ) -2v_i( t) +v_{i-1}( t) +v_i( t)(a_2(\theta _t\omega)-c_2(\theta _t\omega)v_i( t)).
\end{equation}


First we give some notations and assumption related to (\ref{main-eqn}). Let
$$
\underline{a}( \omega) =\liminf\limits_{t-s\rightarrow\infty}\frac{1}{t-s}\int_s^t{a( \theta _{\tau}\omega)}d\tau :=\lim\limits_{r\rightarrow \infty}\inf_{t-s\ge r}\frac{1}{t-s}\int_s^t{a( \theta _{\tau}\omega)}d\tau
$$\\
and
$$
\overline{a}(\omega) =\limsup\limits_{t-s\rightarrow \infty}\frac{1}{t-s}\int_s^t{a( \theta _{\tau}\omega)}d\tau :=\lim\limits_{r\rightarrow \infty}\sup_{t-s\ge r}\frac{1}{t-s}\int_s^t{a( \theta _{\tau}\omega)}d\tau,
$$\\
where $a(\omega)$ could be $a_i(\omega)$, $b_i(\omega)$, $c_i(\omega)$ $(i=1$ or $2)$ or any similar function.
We call $\underline{a}(\cdot)$ and $\overline{a}(\cdot)$ the least mean and the greatest mean of $a(\cdot)$, respectively. It's easy to get that \\
$$ \underline{a}(\theta _t\omega ) =\underline{a}(\omega) \ \ \text{and } \ \ \overline{a}( \theta _t\omega) =\overline{a}( \omega) \ \ \mbox{ for all } t\in\mathbb{R},  $$
and
$$
 \underline{a}(\omega) =\liminf\limits_{t,s\in \mathbb{Q},t-s\rightarrow \infty}\frac{1}{t-s}\int_s^t{a( \theta _{\tau}\omega)}d\tau \ \ \ \text{and\\\ } \ \ \overline{a}(\omega) =\limsup\limits_{t,s\in \mathbb{Q},t-s\rightarrow \infty}\frac{1}{t-s}\int_s^t{a( \theta _{\tau}\omega)}d\tau .
$$\\
Then  $\underline{a}(\omega)$  and $\overline{a}(\omega)$  are measurable in $\omega$. The ergodicity of the metric dynamical system $(\Omega,\mathcal{F},\mathbb{P},\{\theta_t\}_{t\in\mathbb{R}})$ implies that, there are $\underline{a},\overline{a}\in\mathbb{R}$ and a measurable subset $\Omega_0\subset\Omega$ with  $\mathbb{P}(\Omega_0)=1$ such that \\
$$\left\{ \begin{array}{l} 	\theta _t\Omega _0=\Omega _0\ \ \forall t\in \mathbb{R}\\ 	 \liminf\limits_{t-s\rightarrow \infty}\frac{1}{t-s}\int_s^t{a( \theta _{\tau}\omega)}d\tau =\underline{a}\ \ \forall \omega \in \Omega _0\\ 	 \limsup\limits_{t-s\rightarrow \infty}\frac{1}{t-s}\int_s^t{a( \theta _{\tau}\omega)}d\tau =\overline{a}\ \ \forall \omega \in \Omega _0,\\ \end{array} \right. $$
That is, $\underline{a}(\omega)$ and $\overline{a}(\omega)$ are independent of $\omega$ in a subset of $\Omega$ of full measure (see Lemma \ref{L-lemma}).

Throughout this paper, we assume that the trivial solution $(0,0)$ of \eqref{main-eqn} is unstable with respect to perturbation in $ l^{\infty}(\Z)\times l^{\infty}(\Z)$, i.e.

\medskip
\noindent{\bf (H1)}  $ \underline{a_i}(\omega)= \underset{t-s\rightarrow \infty}{\lim\inf}\frac{1}{t-s}\int_s^t{a_i( \theta _{\tau}\omega)}d\tau>0$ $(i=1,2)$ for a.e. $\omega\in\Omega$. \medskip

Note that  $\bf (H1)$ implies that \eqref{main-eqn} has two semi-trivial spatially homogeneous positive solutions $(u^\ast(t;\omega),0):=(\phi^*(\theta_t\omega),0)\in {\rm Int}~l^{\infty,+}(\Z)\times l^{\infty,+}(\Z)$ and $(0,v^\ast(t;\omega)):=(0,\psi^*(\theta_t\omega)) \in l^{\infty,+}(\Z)\times{\rm Int}~ l^{\infty,+}(\Z)$ for some random equilibria $\phi^*$ and $\psi^*$, where $u^*(t;\omega)=\phi^*(\theta_t\omega)$ is the unique spatially homogeneous positive solution of \eqref{single specie 1}, and  $v^*(t;\omega)=\psi^*(\theta_t\omega)$ is the unique spatially homogeneous positive solution of \eqref{single specie 2} (see \cite[Theorem 1.1] {CaSh17} and \cite[Theorem A]{MiSh04}).

We also assume that

\medskip
\noindent{\bf (H2)}  $ (0,v^\ast(t;\omega))$ is unstable in $l^{\infty,+}(\Z)\times l^{\infty,+}(\Z)$, i.e. $\underline{a_1( \omega ) -c_1( \omega )v^*( \cdot;\omega )}>0$. $ (u^\ast(t;\omega),0)$ is linearly and globally stable in $l^{\infty,+}(\Z)\times l^{\infty,+}(\Z)$, i.e. $\overline{a_2( \omega ) -b_2( \omega )u^*( \cdot;\omega )}<0$, and for any $(u_0,v_0)\in \l^{\infty,+}(\Z)\times l^{\infty,+}(\Z)$ with $u_0\neq0$ and a.e. $\omega\in\Omega$, $u_i(t;u_0,v_0,\theta_{t_0}\omega)-u^*(t+t_0;\omega)\rightarrow0$ and $v_i(t;u_0,v_0,\theta_{t_0}\omega)\rightarrow0$ as $t\rightarrow\infty$ uniformly in $i\in\mathbb{Z}$ and $t_0\in\R$.

We remark that if $\underline{a_1(\omega ) -c_1( \omega )v^*( \cdot;\omega )}>0$, then  $ (0,v^\ast(t;\omega))$ is unstable in $l^{\infty,+}(\Z)\times l^{\infty,+}(\Z)$, and if $\overline{a_2( \omega ) -b_2( \omega )u^*( \cdot;\omega )}<0$, then $ (u^\ast(t;\omega),0)$ is locally stable in $l^{\infty,+}(\Z)\times l^{\infty,+}(\Z)$, and if $\underline{a_i}(\omega)>0$ $(i=1,2)$, $a^{\omega}_{1L}>\frac{c^{\omega}_{1M}a^{\omega}_{2M}}{c^{\omega}_{2L}}$ and $a^{\omega}_{2M}\leq\frac{a^{\omega}_{1L}b^{\omega}_{2L}}{b^{\omega}_{1M}}$ for any $\omega\in\Omega$, then $ (u^\ast(t;\omega),0)$ is globally stable and $ (0,v^\ast(t;\omega))$ is unstable in $l^{\infty,+}(\Z)\times l^{\infty,+}(\Z)$, where $a^{\omega}_{iL}=\underset{t\in\mathbb{R}}{\text{inf}}a_i( \theta _t\omega )$, $a^{\omega}_{iM}=\underset{t\in\mathbb{R}}{\text{sup}}a_i( \theta _t\omega )$  and $b^{\omega}_{iL}$, $b^{\omega}_{iM}$, $c^{\omega}_{iL}$, $c^{\omega}_{iM}$ are defined similarly (This can be proved similarly as \cite[Proposition 2.4]{Ba18}).

Now we present the third standing hypothesis.

\medskip
\noindent{\bf (H3)} For any $\omega\in\Omega$, $\underset{t\in\mathbb{R}}{\inf}b_2(\theta_t\omega) >0$, $b_i(\theta_t\omega) \geq c_i(\theta_t\omega)$ $(i=1,2)$ and
$$a_1( \theta _t\omega ) -c_1( \theta _t\omega ) v^*( t;\omega ) \geq a_2( \theta _t\omega ) -2c_2( \theta _t\omega ) v^*( t;\omega )+b_2(\theta _t\omega)v^*( t;\omega )\ \ \mathrm{for\ any}\ t\in\mathbb{R}.$$

Under the assumptions {\bf (H1)}-{\bf (H3)}, one of the most interesting dynamical problems is to study the existence of random transition front (generalized traveling wave) solutions connecting $(u^*(t;\omega),0)$ and $(0,v^*(t;\omega))$ for \eqref{main-eqn}. To do so, we first transform \eqref{main-eqn} to a cooperative system via the following standard change of variables,
$$\tilde{u}_i=u_i,\ \ \tilde{v}_i=v^*(t;\omega)-v_i.$$
Dropping the tilde, \eqref{main-eqn} is transformed into
\begin{equation}\label{main-eqn-trans1}
\left\{
\begin{aligned}
\dot{u}_i &=Hu_i+u_i(a_1(\theta _t\omega)- b_1(\theta _t\omega)u_i-c_1(\theta _t\omega)(v^*(t;\omega)-v_i)) ,\\
\dot{v}_i &=Hv_i+b_2(\theta _t\omega)(v^*(t;\omega)-v_i)u_i+v_i(a_2(\theta _t\omega)- 2c_2(\theta _t\omega)v^*(t;\omega)+c_2(\theta _t\omega)v_i),
\end{aligned}
\right.
\end{equation}
where  $$Hu_i(t):=u_{i+1}(t)-2u_i(t)+u_{i-1}(t), \quad i\in \Z,\, t\in\R.$$


It is clear that \eqref{main-eqn-trans1} is cooperative in the region
 $u_i(t)\ge0$ and $0\le v_i(t)\le v^*(t;\omega)$, and the trivial solution $(0,0)$ of \eqref{main-eqn} becomes $(0,v^*(t;\omega))$, the semitrivial solutions $(0,v^*(t;\omega))$ and $(u^*(t;\omega),0)$ of \eqref{main-eqn} becomes $(0,0)$ and $(u^*(t;\omega),v^*(t;\omega))$, respectively. To study the random transition front solutions of \eqref{main-eqn} connecting $(u^*(t;\omega),0)$ and $(0,v^*(t;\omega))$ is then equivalent to study the random transition front solutions of \eqref{main-eqn-trans1} connecting $(u^*(t;\omega),v^*(t;\omega))$ and $(0,0)$.

We denote $(u(t;u^0,v^0,\omega),v(t;u^0,v^0,\omega))=\{(u_i(t;u^0,v^0,\omega),v_i(t;u^0,v^0,\omega))\}_{i\in\Z}$ as the solution of \eqref{main-eqn-trans1} with $(u(0;u^0,v^0,\omega),v(0;u^0,v^0,\omega))=(u^0,v^0)\in l^\infty(\Z)\times l^\infty(\Z)$. For any $(u^1,u^2)\in l^\infty(\Z)\times l^\infty(\Z)$ and $(v^1,v^2)\in l^\infty(\Z)\times l^\infty(\Z)$, the relation $(u^1,u^2) <  (v^1,v^2)$ $((u^1,u^2) \leq (v^1,v^2)$ resp.) is to be understood componentwise: $u^i < v^i$ $(u^i \leq v^i) $ for each $i$. Other relations like $``\mathrm{max}"$, $``\mathrm{min}"$, $``\mathrm{sup}"$, $``\mathrm{inf}"$ can be similarly understood. Then it is clear that, if $(u^0,v^0)\geq (0,0)$, then $(u(t;u^0,v^0,\omega),v(t;u^0,v^0,\omega))$ exists for all $t\geq0$ and $(u(t;u^0,v^0,\omega),v(t;u^0,v^0,\omega))\geq(0,0)$ for all $t\geq 0$ (see Proposition \ref{comparison}). A solution $(u(t;\omega),v(t;\omega))=\{(u_i(t;\omega),v_i(t;\omega))\}_{i\in\mathbb{Z}}$ of (\ref{main-eqn-trans1}) is called an \textit{entire solution} if it is a solution of (\ref{main-eqn-trans1}) for $t\in\mathbb{R}$.


\begin{definition} [Random transition front] An entire solution $(u(t;\omega),v(t;\omega))$ is called a   \textit{random transition front} or a \textit{random generalized traveling wave} of \eqref{main-eqn-trans1} connecting $(0,0)$ and $(u^*(t;\omega),v^*(t;\omega))$
if for  a.e. $\omega\in\Omega$,
$$
 (u_i( t;\omega ),v_i( t;\omega )) =({\varPhi} ( i-\int_0^t{c( s;\omega) ds},\theta _t\omega),{\varPsi}( i-\int_0^t{c( s;\omega) ds},\theta _t\omega))
 $$
for some $ {\varPhi}( x,\omega)$, ${\varPsi}( x,\omega)$ $(x\in\R)$ and $c( t;\omega) $, where
$ {\varPhi}( x,\omega)$, ${\varPsi}( x,\omega)$ and $c( t;\omega) $ are measurable in $\omega$,
and for a.e. $\omega\in\Omega$,

$$(0,0)<({\varPhi}( x,\omega),{\varPsi}( x,\omega) <(u^*(t;\omega),v^*(t;\omega)),$$

\begin{equation*}
\begin{aligned}
&\underset{x\rightarrow -\infty}{\lim}({\varPhi}( x,\theta _t\omega),{\varPsi}( x,\theta _t\omega)) =(u^*(t;\omega),v^*(t;\omega)),\mbox{ and }\\
&\underset{x\rightarrow \infty}{\lim}({\varPhi}( x,\theta _t\omega),{\varPsi}( x,\theta _t\omega)) =(0,0)\ \ \ \ \text{ uniformly\ in\ }t\in \mathbb{R}.
\end{aligned}
\end{equation*}
\end{definition}

Suppose that $(u(t;\omega),v(t;\omega))=\{(u_i(t;\omega),v_i(t;\omega))\}_{i\in\mathbb{Z}}$ with $(u_i( t;\omega),v_i( t;\omega)) =({\varPhi}( i-\int_0^t{c( s;\omega) ds},\theta _t\omega ),{\varPsi}( i-\int_0^t{c( s;\omega) ds},\theta _t\omega ))$ is a \textit{random transition front} of (\ref{main-eqn-trans1}). If  $ {\varPhi} (x,\omega)$ and ${\varPsi} (x,\omega)$ are non-increasing with respect to $x$ for a.e. $\omega\in\Omega$ and all $x\in\mathbb{R}$, then $(u(t;\omega),v(t;\omega))$ is said to be a  \textit{monotone random transition front}. If there is $\overline{c}_{\inf}\in\mathbb{R}$ such that for a.e. $\omega\in\Omega$,
$$ \underset{t-s\rightarrow \infty}{\lim\inf}\frac{1}{t-s}\int_s^t{c( \tau ;\omega ) d\tau}=\overline{c}_{\inf}, $$
then $\overline{c}_{\inf}$ is called its \textit{least mean speed}.

Note that the ergodicity of the metric dynamical system $(\Omega,\mathcal{F},\mathbb{P},\{\theta_t\}_{t\in\mathbb{R}})$ implies that  $\underline{a_1(\omega ) -c_1( \omega )v^*( \cdot;\omega )}=\underset{t-s\rightarrow \infty}{\lim\text{inf}}\frac{1}{t-s}\int_s^t{( a_1( \theta _{\tau}\omega ) -c_1( \theta _{\tau}\omega ) v^*( \tau ;\omega ) )}d\tau$ is independent of $\omega$ in a subset $\Omega_{0}\subset \Omega$ of full measure. We denote $\underline{\lambda} =\underline{a_1(\omega ) -c_1( \omega )v^*( \cdot;\omega )}$ for $\omega\in\Omega_0$.
For given $\mu>0$, let
$$
 c_0:=\inf \limits_{\mu>0}\frac{e^{\mu}+e^{-\mu}-2+\underline{\lambda}}{\mu},
 $$
 By \cite[Lemma 5.1]{CaSh17},
 there is a unique $\mu^*>0$ such that
 $$ c_0=\frac{e^{\mu ^*}+e^{-\mu ^*}-2+\underline{\lambda}}{\mu ^*} $$
  and for any $\gamma>c_0$, the equation $ \gamma=\frac{e^{\mu }+e^{-\mu}-2+\underline{\lambda}}{\mu} $ has exactly two positive solutions for $\mu$.

Now we are in a position to state the main results on the existence and non-existence of random transition fronts of two species cooperative lattice systems in random media.

\begin{theorem}\label{exist-thm}
Assume {\bf(H1)}-{\bf(H3)} hold. Then we have
\begin{itemize}
\item[(i)]
For any given $\gamma>c_0$, there is a monotone random transition front of  \eqref{main-eqn-trans1} with least mean speed $\overline{c}_{\inf}=\gamma$. More precisely,  for any given $\gamma>c_0$,  let $0<\mu<{\mu}^*$ be such that $\frac{e^{\mu}+e^{-\mu}-2+\underline{\lambda}}{\mu}=\gamma$. Then \eqref{main-eqn-trans1} has a monotone random transition front $(u(t;\omega),v(t;\omega))=\{(u_i(t;\omega),v_i(t;\omega))\}_{i\in\mathbb{Z}}$ with $u_i( t;\omega) ={\varPhi}( i-\int_0^t{c( s;\omega) ds},\theta _t\omega )$ and $v_i( t;\omega)={\varPsi}( i-\int_0^t{c( s;\omega) ds},\theta _t\omega )$, where $c(t;\omega) =\frac{e^{\mu}+e^{-\mu}-2+a_1( \theta _t\omega ) -c_1( \theta _t\omega ) v^*( t;\omega )}{\mu}$ and hence $\overline{c}_{\inf}=\frac{e^{\mu}+e^{-\mu}-2+\underline{\lambda}}{\mu}=\gamma$.
Moreover, for any $\omega\in \Omega_0$,
$$\underset{x\rightarrow -\infty}{\lim}({\varPhi}( x,\theta _t\omega),{\varPsi}( x,\theta _t\omega)) =(u^*(t;\omega),v^*(t;\omega))$$
and
$$\underset{x\rightarrow \infty}{\lim}({\varPhi}( x,\theta _t\omega),{\varPsi}( x,\theta _t\omega)) =(0,0)$$
uniformly in $t\in \mathbb{R}$.
\item[(ii)]
There is no random transition front of \eqref{main-eqn-trans1} with least mean speed less than $c_0$.
\end{itemize}
\end{theorem}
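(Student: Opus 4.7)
The plan is a sub-/super-solution plus limiting-time construction for part (i), combined with a linear dispersion/comparison argument for part (ii). The backbone is the fact that {\bf(H3)} renders \eqref{main-eqn-trans1} cooperative on the biological range, together with the fact that the $u$-equation, linearized about $(0,0)$, reads
\[
\dot w_i = H w_i + \bigl(a_1(\theta_t\omega) - c_1(\theta_t\omega)\,v^*(t;\omega)\bigr) w_i.
\]
A direct substitution shows that $\varphi^\mu_i(t;\omega) := \exp\!\bigl(-\mu(i - \int_0^t c(s;\omega)\,ds)\bigr)$ is an entire positive solution of this linear equation precisely when
\[
c(s;\omega) = \frac{e^{\mu} + e^{-\mu} - 2 + a_1(\theta_s\omega) - c_1(\theta_s\omega)\,v^*(s;\omega)}{\mu},
\]
and by ergodicity the least mean of this $c(\cdot;\omega)$ is exactly $(e^{\mu} + e^{-\mu} - 2 + \underline{\lambda})/\mu = \gamma$.

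For part (i), fix $\gamma > c_0$ and the associated $\mu \in (0,\mu^*)$. I would take as a monotone super-solution
\[
(U^+_i, V^+_i) = \bigl(\min\{u^*(t;\omega),\,\varphi^\mu_i(t;\omega)\},\ \min\{v^*(t;\omega),\,K\varphi^\mu_i(t;\omega)\}\bigr)
\]
with $K$ large (determined by {\bf(H3)}), and as a monotone sub-solution something of the form
\[
(U^-_i, V^-_i) = \bigl(\max\{0,\,\varphi^\mu_i - M\varphi^{\mu+\delta}_i\},\ \max\{0,\,\kappa(\varphi^\mu_i - M\varphi^{\mu+\delta}_i)\}\bigr)
\]
for a small $\delta \in (0, \mu^* - \mu)$, a large $M$, and a suitable $\kappa > 0$. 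The dispersion relation absorbs the linear part of the differential inequalities, while the assumptions $b_i \geq c_i$ and the pointwise inequality in {\bf(H3)} are precisely what is needed to control the quadratic cross-terms and thus enforce cooperativity. I would then solve \eqref{main-eqn-trans1} on $[-n,\infty)$ with initial data $(U^+_\cdot(-n;\omega), V^+_\cdot(-n;\omega))$; by the comparison Proposition \ref{comparison} these solutions stay trapped between the two barriers and inherit monotonicity in $i$. Uniform boundedness plus time-equicontinuity (the equations are ODEs in $t$) allow extraction of a subsequential limit $(u_i(t;\omega), v_i(t;\omega))$, which is then an entire solution with the desired travelling structure in $\xi = i - \int_0^t c(s;\omega)\,ds$; the limits $({\varPhi},{\varPsi})(\pm\infty,\omega)$ follow by squeezing, invoking the global stability in {\bf(H2)} at $\xi=-\infty$ and the super-solution decay at $\xi=+\infty$.

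For part (ii), suppose toward contradiction that a random transition front has least mean speed $\overline{c}_{\inf} < c_0$. Near the leading edge $\xi \to +\infty$, both components $(u_i,v_i)$ are small and the dynamics is well approximated by the linear equation above. Since $c_0$ is the infimum over $\mu > 0$ of the dispersion function $(e^{\mu} + e^{-\mu} - 2 + \underline{\lambda})/\mu$, any exponentially decaying tail of the front forces a dispersion relation whose mean speed is at least $c_0$; a careful comparison of the front with the linear exponentials $\varepsilon\varphi^\mu_i$ for small $\varepsilon$, combined with an application of the ergodic theorem to $\tfrac{1}{t-s}\int_s^t c(\tau;\omega)\,d\tau$, converts this heuristic into a rigorous contradiction with $\overline{c}_{\inf} < c_0$. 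The hard part of the whole proof will be the sub-solution construction in part (i): the discreteness of the spatial variable precludes the derivative-based estimates used in continuous settings, so verifying the discrete differential inequality for the truncated profile $\max\{0,\varphi^\mu_i - M\varphi^{\mu+\delta}_i\}$ on both sides of its free boundary requires a delicate use of the cooperative structure together with the compatibility inequality in {\bf(H3)}.
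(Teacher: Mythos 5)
Your part (i) skeleton (an exponential super-solution capped by $(u^*,v^*)$, a truncated two-exponential sub-solution, comparison, and a limit of solutions started at times $-n\to-\infty$) is the same as the paper's, but two steps as you wrote them would fail in genuinely random media. First, the sub-solution $\max\{0,\varphi^\mu-M\varphi^{\mu+\delta}\}$ with a \emph{constant} $M$, and second component a \emph{constant} multiple $\kappa$ of it, is the constant-coefficient ansatz: checking the differential inequality produces the threshold $\frac{\mu(e^{\tilde\mu}+e^{-\tilde\mu}-2)-\tilde\mu(e^{\mu}+e^{-\mu}-2)}{\tilde\mu-\mu}$ (with $\tilde\mu=\mu+\delta$), which must be dominated by $a_1(\theta_t\omega)-c_1(\theta_t\omega)v^*(t;\omega)$ \emph{pointwise in $t$}, whereas the hypotheses only control its least mean $\underline{\lambda}$; at times when this coefficient dips below the threshold the inequality breaks. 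The paper repairs exactly this with the corrector $A_\omega\in W^{1,\infty}_{loc}(\R)\cap L^\infty(\R)$ from Lemma \ref{technical}, inserting the factor $e^{(\tilde\mu/\mu-1)A_\omega(t)}$ in front of the second exponential (Lemma \ref{sub-solution-lemma}), and likewise replaces your constant $\kappa$ by the time-dependent factor $\sigma h(t;\omega)$ with $h$ a positive solution of the auxiliary ODE built from {\bf(H3)}. You also need $\tilde\mu<2\mu$ (your condition $\delta<\mu^*-\mu$ misses this), since the quadratic terms of size $e^{-2\mu\xi}$ must be absorbed into $e^{-\tilde\mu\xi}$ on the region $\xi\ge0$. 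Second, a subsequential limit obtained from boundedness and equicontinuity gives an entire solution but not a \emph{random} transition front: you lose measurability in $\omega$ (the subsequence may depend on $\omega$) and, more importantly, the profile structure $u_i(t;\omega)=\varPhi(i-\int_0^t c(s;\omega)ds,\theta_t\omega)$, i.e.\ the stationarity $\tilde\varPhi(x,t;\omega)=\tilde\varPhi(x,0;\theta_t\omega)$. The paper instead proves the monotonicity $u(x,t-t_0;\overline u^{\mu}(\cdot,t_0;\omega),\overline v^{\mu}(\cdot,t_0;\omega),\theta_{t_0}\omega)\le\overline u^{\mu}(x,t;\omega)$ (Lemma \ref{monotone-sup}), so the full limit in $\tau$ exists, and then verifies stationarity from the identity $\int_{-\tau}^t c(s;\omega,\mu)ds=\int_{-(t+\tau)}^0 c(s;\theta_t\omega,\mu)ds$. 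Finally, squeezing between your barriers cannot give the limit $(u^*,v^*)$ as $\xi\to-\infty$, because the sub-solution only tends to a small positive constant there; the paper combines that positive lower bound with {\bf(H2)} and Proposition \ref{convergence-lemma} to get the uniform-in-$t$ limit, which is the direction you gesture at and which can be repaired along those lines.

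Part (ii) as proposed is a heuristic, not a proof. A random transition front need not have an exponential tail, and the claim that any decaying tail ``forces a dispersion relation whose mean speed is at least $c_0$'' is precisely what has to be established; no mechanism is given to convert the linearization at the leading edge into a contradiction. The paper's route is different and is the substantive content of (ii): it introduces the spreading speed $c_*(\omega)$ for compactly supported data, proves $c_*\ge c_0$ by comparing the $u$-component with the scalar lattice KPP equation with coefficient $a_1-c_1v^*$ (the spreading result of \cite{CaGa19}) and upgrading via Lemma \ref{speed} to convergence of both components to $(u^*,v^*)$, proves $c_*\le c_0$ by playing the fronts constructed in part (i) with speeds $\gamma$ arbitrarily close to $c_0$ against a hypothetical faster spreading, and then, given any transition front, places compactly supported data below its profile to deduce $\int_0^{t+s}c(\tau;\omega)d\tau-\int_0^{s}c(\tau;\omega)d\tau\ge(c_0-\epsilon)t-M(\omega)$, hence $\overline{c}_{\inf}\ge c_0$. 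Without this spreading-speed machinery (or a rigorous substitute), your part (ii) does not go through.
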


\begin{remark}
\begin{itemize}
\item[(i)] 
When $a_i, b_i, c_i$ $(i=1,2)$ are constants, our existence result of the transition front is consistent with the result obtained in \cite[Theorem 1, Theorem 4]{GuWu12}. Also, we get the non-existence result of the transition front.
\item[(ii)] We leave as an open problem the case $\overline{c}_{\inf}=c_0$, that is, the existence of random transition front of \eqref{main-eqn-trans1} with least mean speed $\overline{c}_{\inf}=c_0$.
\end{itemize}
\end{remark}

The rest of the paper is organized as follows. In Section 2, we establish the comparison
principle for sub-solutions and super-solutions of  \eqref{main-eqn-trans1} and prove some basic properties and fundamental lemmas to be used in later section. We prove the existence and non-existence of random transition fronts after constructing appropriate sub-solutions and super-solutions of \eqref{main-eqn-trans1} in Section 3.

\section{Preliminary}

In this section, we present some preliminary materials to be used in later sections. We
first present a comparison principle for sub-solutions and super-solutions of \eqref{main-eqn-trans1} and prove the convergence of solutions on compact subsets. Next, we present some useful lemmas including a technical lemma.

Consider first the following space continuous version of \eqref{main-eqn-trans1},
\begin{equation}\label{main-trans1-continuous}
\left\{
\begin{aligned}
\pa_t u=&Hu+u(a_1(\theta _t\omega)- b_1(\theta_t\omega)u-c_1(\theta _t\omega)(v^*(t;\omega)-v)) ,\\
\pa_t v=&Hv+b_2(\theta _t\omega)(v^*(t;\omega)-v)u+v(a_2(\theta _t\omega)- 2c_2(\theta _t\omega)v^*(t;\omega)+c_2(\theta _t\omega)v),
\end{aligned}
\right.
\end{equation}
where
\begin{equation*}
u=u(x,t),~v=v(x,t),\end{equation*}
and
\begin{equation*}
Hu(x,t):=u(x+1,t)+u(x-1,t)-2u(x,t),\ \ \quad x\in \R,\, t\in\R.
\end{equation*}

Let
$$
l^\infty(\R)=\{u:\R\to\R\,:\, \sup_{x\in\R}|u(x)|<\infty\}
$$
with norm $\|u\|=\sup_{x\in\R}|u(x)|$, and
$$l^{\infty,+}(\R)=\{u\in l^\infty(\R)\,:\, \inf_{x\in\R}u(x)\ge 0\}.$$
For $u,v\in l^\infty(\R)$, we define
$$
u\ge v \quad {\rm if}\quad u-v\in l^{\infty,+}(\R).
$$

For any $(u_0,v_0)\in l^\infty(\R)\times l^\infty(\R)$, let $(u(x,t;u_0,v_0,\omega),v(x,t;u_0,v_0,\omega))$ be the solution of \eqref{main-trans1-continuous} with $(u(x,0;u_0,v_0,\omega),v(x,0;u_0,v_0,\omega))=(u_0(x),v_0(x))$. Recall that
for any $(u^0,v^0)\in l^\infty(\Z)\times l^\infty(\Z)$, $(u(t;u^0,v^0,\omega),v(t;u^0,v^0,\omega))=\{(u_i(t;u^0,v^0,\omega),v_i(t;u^0,v^0,\omega))\}_{i\in\Z}$ is the solution of \eqref{main-eqn-trans1} with $(u_i(0;u^0,v^0,\omega),v_i(0;u^0,v^0,\omega))=(u^0_i,v^0_i)$ for $i\in\Z$. For any $(u^1,u^2)$, $(v^1,v^2)\in l^\infty(\R)\times l^\infty(\R)$, the relation $(u^1,u^2) <  (v^1,v^2)$ $((u^1,u^2) \leq (v^1,v^2)$ resp.) is also to be understood componentwise: $u^i < v^i$ $(u^i \leq v^i) $ for each $i$.

Let
$$f(t,u,v,\omega)=u(a_1(\theta _t\omega)- b_1(\theta_t\omega)u-c_1(\theta _t\omega)(v^*(t;\omega)-v))$$
and
$$g(t,u,v,\omega)=b_2(\theta _t\omega)(v^*(t;\omega)-v)u+v(a_2(\theta _t\omega)- 2c_2(\theta _t\omega)v^*(t;\omega)+c_2(\theta _t\omega)v).$$
A pair of function $(u(x,t;\omega),v(x,t;\omega))$ on $\R\times[0,T)$ which is continuous in $t$
is called a {\it super-solution} or {\it sub-solution} of \eqref{main-trans1-continuous} (resp. \eqref{main-eqn-trans1}) if for a.e. $\omega\in\Omega$ and any given $x\in\R$ (resp. $x\in\Z$), $u(x,t;\omega)$ and $v(x,t;\omega)$ are absolutely continuous in $t\in [0,T)$, and
$$
\left\{ \begin{array}{l}
	u_t( x,t;\omega ) \ge  Hu( x,t;\omega ) +f( t,u,v,\omega )\\
	v_t( x,t;\omega ) \ge  Hv( x,t;\omega ) +g( t,u,v,\omega )\\
\end{array} \right. \ \ \ \ \mathrm{for\ \ a.e. }\  t\in [ 0,T)
$$
or
$$
\left\{ \begin{array}{l}
	u_t( x,t;\omega )\le Hu( x,t;\omega ) +f( t,u,v,\omega )\\
	v_t( x,t;\omega )\le Hv( x,t;\omega ) +g( t,u,v,\omega )\\
\end{array} \right. \ \ \ \ \mathrm{for\ \ a.e. }\ t\in [ 0,T) .
$$
A pair of function is said to be a generalized super-solution (resp. sub-solution)
if it is the infimum (resp. supremum) of a finite number of super-solutions (resp. sub-solutions).

Now we are in a position to present a comparison principle for solutions of \eqref{main-trans1-continuous}, the comparison principle for solutions of \eqref{main-eqn-trans1} can be proved similarly.

\begin{proposition}[Comparison principle]
\label{comparison}
\begin{itemize}
\item[(1)]
Suppose that $(u_1(x,t;\omega),v_1(x,t;\omega))$ is a bounded sub-solution of \eqref{main-trans1-continuous} on $[0,T)$ and that $(u_2(x,t;\omega),v_2(x,t;\omega))$ is a bounded super-solution of \eqref{main-trans1-continuous}  on $[0,T)$ and $(u_i(x,t;\omega),v_i(x,t;\omega))$ $\in$ $[ 0,u^*( t;\omega ) ] \times [ 0,v^*( t;\omega ) ]$ $( i=1,2 )$ for $x\in\R$ and $t\in [0,T)$. If $(u_1(\cdot,0;\omega),v_1( \cdot,0;\omega))\le(u_2( \cdot,0;\omega ) ,v_2( \cdot,0;\omega))$, then
$$(u_1(\cdot,t;\omega),v_1( \cdot,t;\omega))\le(u_2( \cdot,t;\omega ) ,v_2( \cdot,t;\omega)) \quad \mbox{ for }\  t\in[0,T).$$

\item[(2)]  Suppose that $(u_i(x,t;\omega),v_i(x,t;\omega))$ $\in$ $[ 0,u^*( t;\omega ) ] \times [ 0,v^*( t;\omega ) ]$ $(i=1,2)$ are bounded and satisfy that for any given $x\in\R$,
 $u_i(x,t;\omega)$, $v_i(x,t;\omega)$ $(i=1,2)$ are absolutely continuous in $t\in[0,\infty)$, and
 \begin{align*}
 &\pa_t u_2(x,t;\omega)-(Hu_2(x,t;\omega)+f(t,u_2,v_2,\omega))\\
 &>\pa_t u_1(x,t;\omega)-(Hu_1(x,t;\omega)+f(t,u_1,v_1,\omega)),
 \end{align*}
 \begin{align*}
 &\pa_t v_2(x,t;\omega)-(Hv_2(x,t;\omega)+g(t,u_2,v_2,\omega))\\
 &>\pa_t v_1(x,t;\omega)-(Hv_1(x,t;\omega)+g(t,u_1,v_1,\omega))
 \end{align*}
for $t>0$. Moreover, suppose that $(u_2(\cdot,0;\omega),v_2(\cdot,0;\omega))\geq (u_1(\cdot,0;\omega),v_1(\cdot,0;\omega))$. Then $(u_2(\cdot,t;\omega),v_2(\cdot,t;\omega))> (u_1(\cdot,t;\omega),v_1(\cdot,t;\omega))$ for  $t>0$.
\end{itemize}
\end{proposition}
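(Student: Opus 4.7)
Set $w := u_2 - u_1$ and $z := v_2 - v_1$; each is absolutely continuous in $t$ for every fixed $x$. Subtracting the sub- and super-solution inequalities and applying the mean value theorem to $f$ and $g$, together with the cooperativity identities $\partial_v f = c_1 u \geq 0$ and $\partial_u g = b_2(v^* - v) \geq 0$ that hold on the box $[0, u^*] \times [0, v^*]$, I obtain for a.e.\ $t$
\begin{equation*}
\partial_t w \geq Hw + A w + B z, \qquad \partial_t z \geq Hz + C w + D z,
\end{equation*}
with $B, C \geq 0$ and all four coefficients bounded by some $K$ on $\R \times [0, T']$ for every $T' < T$ (using local boundedness of the coefficients and of $u^*, v^*$).

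The plan for part (1) is an exponential-shift maximum principle that exploits the fact that the lattice Laplacian $H$ annihilates any spatially constant perturbation. Fix $T' < T$, pick $\lambda > 2K$, and for small $\epsilon > 0$ set $W_\epsilon := w + \epsilon e^{\lambda t}$, $Z_\epsilon := z + \epsilon e^{\lambda t}$. Since $H(\epsilon e^{\lambda t}) = 0$, direct computation yields
\begin{equation*}
\partial_t W_\epsilon \geq H W_\epsilon + A W_\epsilon + B Z_\epsilon + (\lambda - A - B) \epsilon e^{\lambda t},
\end{equation*}
and the analogous inequality for $Z_\epsilon$, with both source terms bounded below by $(\lambda - 2K)\epsilon > 0$. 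I then show $W_\epsilon, Z_\epsilon > 0$ on $\R \times [0, T']$ by contradiction. If not, let $t_*$ be the first time the envelope $\min(\inf_x W_\epsilon(\cdot, t), \inf_x Z_\epsilon(\cdot, t))$ fails to be strictly positive; then $t_* > 0$ and by pointwise-in-$x$ continuity $W_\epsilon(\cdot, t_*), Z_\epsilon(\cdot, t_*) \geq 0$. Pick WLOG $x_n$ with $W_\epsilon(x_n, t_*) \to 0$. Using $W_\epsilon, Z_\epsilon \geq 0$ on $[0, t_*]$ gives $H W_\epsilon(x_n, s) \geq -2 W_\epsilon(x_n, s)$ and $BZ_\epsilon \geq 0$, so
\begin{equation*}
\partial_t W_\epsilon(x_n, s) \geq -(K+2) W_\epsilon(x_n, s) + (\lambda - 2K) \epsilon.
\end{equation*}
The sub-/super-solution structure forces a one-sided a priori bound $\partial_t W_\epsilon \geq -M$ uniformly in $x$, whence $W_\epsilon(x_n, s) \leq W_\epsilon(x_n, t_*) + M(t_* - s)$; choosing $\delta > 0$ small and $n$ large makes this small enough that $\partial_t W_\epsilon(x_n, s) \geq \tfrac{1}{2}(\lambda - 2K) \epsilon$ on $[t_* - \delta, t_*]$. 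Integrating and using $W_\epsilon(x_n, t_* - \delta) \geq 0$ forces $W_\epsilon(x_n, t_*) \geq \tfrac{1}{2}(\lambda - 2K) \epsilon \delta$, contradicting $W_\epsilon(x_n, t_*) \to 0$. Letting $\epsilon \to 0^+$ and $T' \uparrow T$ completes (1).

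For part (2), apply (1) to the non-strict inequalities to obtain $w, z \geq 0$ on $[0, \infty)$. For strictness, fix any $x_0 \in \R$; using $w \geq 0$ everywhere and $Bz \geq 0$ collapses the strict inequality to
\begin{equation*}
\partial_t w(x_0, t) > (A(x_0, t) - 2) w(x_0, t) \qquad \text{for a.e. } t > 0.
\end{equation*}
Multiplying by the integrating factor $\phi(t) := \exp\bigl(-\int_0^t (A(x_0, s) - 2)\, ds\bigr)$ makes $\phi(t) w(x_0, t)$ absolutely continuous with a.e.\ strictly positive derivative, hence strictly increasing; since $\phi(0) w(x_0, 0) \geq 0$, this forces $w(x_0, t) > 0$ for every $t > 0$, and the same argument handles $z$. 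The main obstacle is the lack of spatial compactness on $\R$: the infimum of $W_\epsilon(\cdot, t)$ need not be attained. What saves the argument is the exact annihilation $H(\epsilon e^{\lambda t}) = 0$ together with the one-sided Lipschitz-in-$t$ bound $\partial_t W_\epsilon \geq -M$, which together make the sequence-along-infimizers argument quantitative.
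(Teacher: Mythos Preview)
Your argument is correct, but part~(1) follows a genuinely different route from the paper. The paper introduces the exponential weight $Q_i = e^{ct}(u_2^{(i)} - u_1^{(i)})$ with $c$ large enough to make \emph{all} the linearized coefficients nonnegative, and then runs a short-time argument: on an interval $[0,T_0]$ with $T_0 \le \frac{1}{2(1+p_0)}$, a near-minimizer sequence $(x_n,t_n)$ together with direct integration of the inequality yields $Q_1^{\inf} \ge 2(1+p_0)T_0\,Q_1^{\inf}$, which is impossible when $Q_1^{\inf}<0$. No first-hitting-time or $\epsilon$-perturbation is needed; the trade-off is that one must iterate the conclusion across successive short subintervals to reach any $T'<T$. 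Your approach instead perturbs by $\epsilon e^{\lambda t}$ (exploiting $H(\text{const})=0$), sets up a first-touching-time $t_*$, and uses the one-sided Lipschitz bound $\partial_t W_\epsilon \ge -M$ to make the near-minimizer argument quantitative near $t_*$. This avoids the short-time restriction and the iteration, at the cost of a more delicate analysis of $t_*$ (you should spell out why $t_*>0$ and why $\inf_x W_\epsilon(\cdot,t_*)=0$; both follow from the one-sided Lipschitz bound, but your sketch glosses over this).

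For part~(2), your argument and the paper's are essentially the same. The paper also first invokes the proof of~(1) (not its statement, since the hypotheses of~(2) do not say $u_1$ is a sub-solution and $u_2$ a super-solution, only that the difference inequality is strict) to obtain $w,z\ge 0$, and then bootstraps to strictness. The paper does this via the exponential shift $e^{ct}$ and a direct integral inequality $w(x,t) > w(x,0) + \int_0^t[\text{nonneg}] \ge 0$; your integrating-factor formulation is an equivalent repackaging of the same estimate.
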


\begin{proof}
(1)  Let $Q_1( x,t;\omega ) =e^{ct}( u_2( x,t;\omega ) -u_1( x,t;\omega))$ and $Q_2( x,t;\omega ) =e^{ct}( v_2( x,t;\omega ) -v_1( x,t;\omega))$, where $c:=c(\omega)$ is to be determined later. Then there is a measurable subset $\bar{\Omega}$ of $\Omega$ with $\mathbb{P}(\bar{\Omega})=0$ such that for any $\omega\in\Omega\setminus\bar\Omega$, $Q_1( x,t;\omega )$ and $Q_2( x,t;\omega )$ satisfy
\begin{align}
\label{difference}
\left\{ \begin{array}{l}
	\partial_t Q_1\ge Q_1(x+1,t;\omega)+Q_1(x-1,t;\omega) +a_1( x,t;\omega) Q_1( x,t;\omega)+b_1( x,t;\omega) Q_2( x,t;\omega ),\\
	\partial_t Q_2\ge Q_2( x+1,t;\omega ) +Q_2( x-1,t;\omega ) +a_2( x,t;\omega) Q_1( x,t;\omega ) +b_2( x,t;\omega ) Q_2( x,t;\omega ),\\
\end{array} \right.
\end{align}
where
\begin{align*}
a_1(x,t;\omega ) =c-2+f_u( t,u_{1}^{*},v_{1}^{*},\omega),\ \ b_1( x,t;\omega ) =f_v( t,u_{1}^{*},v_{1}^{*},\omega),\\
a_2( x,t;\omega ) =g_u( t,u_{2}^{*},v_{2}^{*},\omega),\ \ b_2(x,t;\omega ) =c-2+g_v( t,u_{2}^{*},v_{2}^{*},\omega)
\end{align*}
for some $u_i^*=u^*_i(x,t;\omega)$ $(i=1,2)$ between $u_1(x,t;\omega)$ and $u_2(x,t;\omega)$ and some $v_i^*=v^*_i(x,t;\omega)$ $(i=1,2)$ between $v_1(x,t;\omega)$ and $v_2(x,t;\omega)$.

Since system \eqref{main-trans1-continuous} is cooperative in $[ 0,u^*( t;\omega ) ] \times [ 0,v^*( t;\omega ) ]$, we have $b_1(x,t;\omega)\geq0$ and $a_2(x,t;\omega)\geq0$.
By the boundedness of $u_i(x,t;\omega)$ and $v_i(x,t;\omega)$ $(i=1,2)$, we can choose $c=c(\omega)>0$ such that $b_2(x,t;\omega)\geq0$ and $a_1(x,t;\omega)\geq0$.

We claim that $Q_i(x,t;\omega)\geq0$ $(i=1,2)$ for $x\in\R$ and $t\in[0,T]$.
Let $p_0(\omega) :=\underset{i=1,2}{\max}~\underset{( x,t) \in \mathbb{R}\times [ 0,T ]}{\max}\{ a_i( x,t;\omega ) ,b_i( x,t;\omega ) \}$. It suffices to prove the claim for $x\in\R$ and $t\in(0,T_0]$ with $T_0=\min \{ T,\frac{1}{2( 1+p_0(\omega) )}\}.$ Assume that there are some $\tilde{x}\in\R$ and $\tilde{t}\in(0,T_0]$ such that $Q_1(\tilde{x},\tilde{t};\omega)<0$ or $Q_2(\tilde{x},\tilde{t};\omega)<0$.
Then there is $t^0\in(0,T_0)$ such that $$Q_1^{\inf}(\omega):=\underset{(x,t)\in\R\times[0,t^0]}{\inf}{Q_1(x,t;\omega)}<0\ \ \mathrm{or}\ \ Q_2^{\inf}(\omega):=\underset{(x,t)\in\R\times[0,t^0]}{\inf}{Q_2(x,t;\omega)}<0. $$
Without loss of generality, we assume that $Q^{\inf}_1(\omega)\leq Q^{\inf}_2(\omega)$.
Observe that there are $x_n\in\R$ and $t_n\in(0,t^0]$ such that
$$Q_1(x_n,t_n;\omega)\to Q^{\inf}_1(\omega)\quad\mbox{ as }\,n\to\infty.$$
By \eqref{difference}  and the fundamental theorem of calculus for Lebesgue integrals, we get
\begin{align}
&Q_1(x_n,t_n;\omega)-Q_1(x_n,0;\omega)\nonumber\\
&\geq\int^{t_n}_0[Q_1(x_n+1,t;\omega)+Q_1(x_n-1,t;\omega)+a_1(x_n,t;\omega)Q_1(x_n,t;\omega)\nonumber\\
& \ \ \ +b_1(x_n,t;\omega)Q_2(x_n,t;\omega)]dt \nonumber\\
&\geq\int^{t_n}_0[2Q^{\inf}_1(\omega)+a_1(x_n,t;\omega)Q^{\inf}_1(\omega)+b_1(x_n,t;\omega)Q^{\inf}_2(\omega)]dt \nonumber\\
&\geq\int^{t_n}_0[2Q^{\inf}_1(\omega)+2p_0(\omega)Q^{\inf}_1(\omega)]dt \nonumber\\
&\geq 2(1+p_0(\omega))t^0Q^{\inf}_1(\omega)\quad\quad\mbox{ for }\,n\geq1.\nonumber
\end{align}
Note that $Q_1(x_n,0;\omega)\geq0$, we then have
$$Q_1(x_n,t_n;\omega)\geq 2(1+p_0(\omega))t^0Q^{\inf}_1(\omega)\quad\quad\mbox{ for }\,n\geq1.$$
Letting $n\to\infty$, we obtain
$$Q^{\inf}_1(\omega)\geq 2(1+p_0(\omega))t^0Q^{\inf}_1(\omega)>Q^{\inf}_1(\omega).$$
A contradiction. Hence $Q_i(x,t;\omega)\geq0$ $(i=1,2)$ for $x\in \R$ and $t\in [0,T]$, which implies that $(u_1(x,t;\omega),v_1( x,t;\omega))\le(u_2( x,t;\omega ) ,v_2( x,t;\omega))$ for $\omega\in\Omega\setminus\bar\Omega$, $x\in\R$ and $t\in[0,T]$.

(2)  Since system \eqref{main-trans1-continuous} is cooperative in $[ 0,u^*( t;\omega ) ] \times [ 0,v^*( t;\omega ) ]$, then for $\omega\in\Omega\setminus\bar{\Omega}$, by the similar arguments as getting \eqref{difference}, we can find $c(\omega), \mu(\omega)>0$ such that for any given $x\in\R$,
$$\partial_tw(x,t;\omega)>w(x+1,t;\omega)+w(x-1,t;\omega)+\mu(\omega) w(x,t;\omega) \ \ \mathrm{for}\  \ t>0, $$
where $ w(x,t;\omega)=e^{c(\omega)t}(u_2(x,t,\omega)-u_1(x,t,\omega))$.
Thus we have that for any given $x\in\R$,
$$w(x,t;\omega)>w(x,0;\omega)+\int^t_0[w(x+1,s;\omega)+w(x-1,s;\omega)+\mu(\omega) w(x,s;\omega)]ds.$$
By the arguments in (1), $w(x,t;\omega)\geq0$ for all $x\in\R$ and $t\geq0$. It then follows that $w(x,t;\omega)>w(x,0;\omega)\geq0$ and and hence $u_2(x,t;\omega)>u_1(x,t;\omega)$ for $\omega\in\Omega\setminus\bar\Omega$, $x\in\R$ and $t>0$.
 Similarly, we can get that $v_2(x,t;\omega)>v_1(x,t;\omega)$ for $\omega\in\Omega\setminus\bar\Omega$, $x\in\R$ and $t>0$.
\end{proof}

\begin{proposition}
\label{convergence-lemma}
Suppose that $(u_n,v_n)\in l^{\infty,+}(\R)\times l^{\infty,+}(\R)$ $(n=1,2,\cdots)$ and $(u_0,v_0)\in l^{\infty,+}(\R)\times l^{\infty,+}(\R)$ with $\{\|u_n\|\}$, $\{\|v_n\|\}$ being bounded.
If $(u_n(x),v_n(x))\to (u_0(x),v_0(x))$ as $n\to\infty$
 uniformly in $x$ on bounded sets,
{ then for each $t>0$}, $(u(x,t;u_n,v_n,\theta_{t_0}\omega),v(x,t;u_n,v_n,$\\ $\theta_{t_0}\omega))\to (u(x,t;u_0,v_0,\theta_{t_0}\omega),v(x,t;u_0,v_0,\theta_{t_0}\omega))$ as $n\to\infty$ uniformly in $x$ on bounded sets and $t_0\in\R$.
\end{proposition}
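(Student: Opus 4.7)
The plan is to reduce the convergence to a pointwise integral (Duhamel) inequality for $W_n:=|U_n|+|V_n|$, where $(U_n,V_n)$ is the difference of the two solutions, and then to exploit the fact that the semigroup generated by $H$ on $l^\infty(\R)$ has an explicit heat-kernel representation with summable, rapidly decaying tails.

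First, from $\sup_n(\|u_n\|+\|v_n\|)<\infty$, the uniform-in-$t$ boundedness on $\R$ of $a_i(\theta_t\omega)$, $b_i(\theta_t\omega)$, $c_i(\theta_t\omega)$, $v^*(t;\omega)$ for fixed $\omega$, and Proposition~\ref{comparison}, the solutions $(u(x,t;u_n,v_n,\theta_{t_0}\omega),v(x,t;u_n,v_n,\theta_{t_0}\omega))$ are uniformly bounded on $\R\times[0,T]$ by some $M=M(\omega,T)$ independent of $n$ and $t_0$. Writing $U_n=u(\cdot,\cdot;u_n,v_n,\theta_{t_0}\omega)-u(\cdot,\cdot;u_0,v_0,\theta_{t_0}\omega)$ and $V_n$ analogously, subtracting two instances of \eqref{main-trans1-continuous} and applying the mean value theorem to $f$ and $g$, one obtains a linear cooperative system
\begin{align*}
\pa_t U_n&=HU_n+\alpha_{11}(x,t)U_n+\alpha_{12}(x,t)V_n,\\
\pa_t V_n&=HV_n+\alpha_{21}(x,t)U_n+\alpha_{22}(x,t)V_n,
\end{align*}
with $|\alpha_{ij}|\le C$ for some constant $C=C(\omega,T,M)$ independent of $n$ and $t_0$.

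Using the Duhamel formula for $\pa_t-H$ together with the triangle inequality (applicable because the kernel of $e^{tH}$ is nonnegative), $W_n=|U_n|+|V_n|$ satisfies
$$W_n(x,t)\le (e^{tH}W_n(\cdot,0))(x)+2C\int_0^t(e^{(t-s)H}W_n(\cdot,s))(x)\,ds,$$
where $(e^{tH}f)(x)=\sum_{j\in\Z}K_t(j)f(x-j)$ with $K_t(j)=e^{-2t}I_{|j|}(2t)\ge 0$, $\sum_{j\in\Z}K_t(j)=1$, and $I_{|j|}$ the modified Bessel function of the first kind. The corresponding equality equation $Y=e^{tH}W_n(\cdot,0)+2C\int_0^t e^{(t-s)H}Y(\cdot,s)\,ds$ has the explicit unique solution $Y(x,t)=e^{2Ct}(e^{tH}W_n(\cdot,0))(x)$; writing $Y-W_n\ge 2C\int_0^t e^{(t-s)H}(Y-W_n)\,ds$ and taking the infimum in $x$ yields, by a standard Gronwall argument and the boundedness of $Y-W_n$, that $W_n\le Y$, hence
$$W_n(x,t)\le e^{2Ct}\sum_{j\in\Z}K_t(j)W_n(x-j,0)\quad\text{for all }x\in\R,\ t\in[0,T].$$
Given $R,T,\epsilon>0$, split this sum at $|j|\le J$ vs.\ $|j|>J$. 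For $|x|\le R$, the first part is bounded by $\sup_{|y|\le R+J}W_n(y,0)$, which tends to $0$ as $n\to\infty$ by the hypothesis of uniform convergence on bounded sets; the second part is bounded by $2M\sum_{|j|>J}K_t(j)$, which tends to $0$ uniformly in $t\in[0,T]$ as $J\to\infty$ since $\sum_{j\in\Z}K_t(j)=1$ has tails decaying uniformly on compact time intervals. Choosing $J$ large and then $n$ large gives the required convergence; uniformity in $t_0\in\R$ is automatic because $C$, $M$, and the hypothesis of uniform convergence on bounded sets do not depend on $t_0$.

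The main technical obstacle is the comparison step $W_n\le Y$: a naive $L^\infty$-Gronwall estimate produces only $\|W_n(\cdot,t)\|_\infty\le e^{2Ct}\|W_n(\cdot,0)\|_\infty$, which is useless because the right-hand side need not vanish with $n$ (the initial convergence is not assumed uniform on all of $\R$). The point of keeping the pointwise Duhamel form and working with the explicit kernel $K_t$ is that the discrete Laplacian $H$ has a ``finite speed of propagation'' on $\Z$, reflected in the rapid decay of $K_t(j)$ in $|j|$ for bounded $t$, so that the localized information on the initial convergence can be propagated forward in time.
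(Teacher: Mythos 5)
Your argument is correct, and it differs from the paper's proof in the mechanism used to localize the estimate. The paper linearizes the difference exactly as you do, but then works in the exponentially weighted space $X(\rho)$ of pairs $(u,v)$ with $(u(\cdot)e^{-\rho|\cdot|},v(\cdot)e^{-\rho|\cdot|})\in l^\infty(\R)\times l^\infty(\R)$: since $(H,H)$ is a bounded operator on $X(\rho)$ and the linearized coefficients are uniformly bounded, a Duhamel-plus-Gronwall estimate in the $X(\rho)$-norm gives $\|(u^n,v^n)(\cdot,t)\|_{X(\rho)}\le Me^{(\alpha+M^2)t}\|(u^n,v^n)(\cdot,0)\|_{X(\rho)}$, and the exponential weight is what converts convergence on bounded sets (together with the uniform $l^\infty$ bound) into $\|(u^n,v^n)(\cdot,0)\|_{X(\rho)}\to 0$. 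You instead keep the Duhamel inequality pointwise, dominate $W_n=|U_n|+|V_n|$ by the explicit solution $e^{2Ct}\,e^{tH}W_n(\cdot,0)$ of the majorizing linear problem, and exploit the explicit kernel $K_t(j)=e^{-2t}I_{|j|}(2t)$, whose tails decay uniformly for $t\in[0,T]$ (e.g.\ $K_t(j)\le e^{t^2-2t}t^{|j|}/|j|!$, or a Dini argument), to split near and far contributions. Both devices serve the same purpose, namely preventing the far field, where no convergence of the initial data is assumed, from contaminating bounded sets; the weighted-norm route is shorter and needs no kernel formula, while yours is more elementary and quantitative and yields the convergence uniformly in $t\in[0,T]$ rather than for each fixed $t$. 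Your intermediate comparison $W_n\le Y$ (subtract the two integral relations, take the infimum in $x$, and apply Gronwall to $\int_0^t\inf_x(Y-W_n)$) is sound. The one place where you are no more rigorous than the paper is the opening claim that the solutions, hence the coefficients $\alpha_{ij}$, are bounded on $\R\times[0,T]$ uniformly in $n$ and $t_0$: Proposition \ref{comparison} as stated applies only to solutions confined to $[0,u^*(t;\omega)]\times[0,v^*(t;\omega)]$, so for general bounded nonnegative data this bound should rather be obtained by comparison with spatially homogeneous super-solutions; the paper itself simply asserts the uniform boundedness of its coefficients $a_i^n$, $b_i^n$ at the same level of detail, so this is not a gap relative to the original proof.
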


\begin{proof}
Fix any $\omega\in\Omega$. Let
\begin{align*}
\left\{\begin{array}{l}
u^n(x,t;\theta_{t_0}\omega)=u(x,t;u_n,v_n,\theta_{t_0}\omega)-u(x,t;u_0,v_0,\theta_{t_0}\omega),\\
v^n(x,t;\theta_{t_0}\omega)=v(x,t;u_n,v_n,\theta_{t_0}\omega)-v(x,t;u_0,v_0,\theta_{t_0}\omega).\\
\end{array} \right.
\end{align*}
Then
\begin{align*}
\left\{ \begin{array}{l}
\partial_t u^{n}=Hu^n+a_{1}^{n}( x,t;\theta _{t_0}\omega)u^n+b_{1}^{n}( x,t;\theta _{t_0}\omega) v^n,\\
\partial_t v^{n}=Hv^n+a_{2}^{n}( x,t;\theta _{t_0}\omega)u^n+b_{2}^{n}( x,t;\theta _{t_0}\omega) v^n,\\
\end{array} \right.
\end{align*}
where
$$a_{1}^{n}( x,t;\theta _{t_0}\omega)=f_u(t,u_1^n(x,t;\theta_{t_0}\omega),v^n_1(x,t;\theta_{t_0}\omega),\theta_{t_0}\omega),$$
$$b_{1}^{n}( x,t;\theta _{t_0}\omega)=f_v(t,u_1^n(x,t;\theta_{t_0}\omega),v^n_1(x,t;\theta_{t_0}\omega),\theta_{t_0}\omega),$$
$$a_{2}^{n}( x,t;\theta _{t_0}\omega)=g_u(t,u_2^n(x,t;\theta_{t_0}\omega),v^n_2(x,t;\theta_{t_0}\omega),\theta_{t_0}\omega),$$
and
$$b_{2}^{n}( x,t;\theta _{t_0}\omega)=g_v(t,u_2^n(x,t;\theta_{t_0}\omega),v^n_2(x,t;\theta_{t_0}\omega),\theta_{t_0}\omega),$$
for some $u_1^n(x,t;\theta_{t_0}\omega)$, $u_2^n(x,t;\theta_{t_0}\omega)$ between $u(x,t;u_n,v_n,\theta_{t_0}\omega)$ and $u(x,t;u_0,v_0,\theta_{t_0}\omega)$, and some $v^n_1(x,t;\theta_{t_0}\omega))$, $v^n_2(x,t;\theta_{t_0}\omega))$ between $v(x,t;u_n,v_n,\theta_{t_0}\omega)$ and $v(x,t;u_0,v_0,\theta_{t_0}\omega)$.

Take a $\rho>0$. Let
$$
X(\rho)=\{(u,v): \R \to \R^2 \ |\,\  (u(\cdot) e^{-\rho |\cdot|},v(\cdot) e^{-\rho |\cdot|})\in l^\infty(\R)\times l^\infty(\R)\}
$$
with norm $\|(u,v)\|_{X(\rho)}= \underset{x\in\R}{\text{sup}}( |u( x )| +|v( x )| ) e^{-\rho | x |}
$.
Observe that
$$(H,H):X(\rho)\to X(\rho),$$
$$(H,H)(u,v)=(Hu,Hv),$$
is a bounded linear operator. Note also that $a_{i}^{n}( x,t;\theta _{t_0}\omega)$ and $b_{i}^{n}( x,t;\theta _{t_0}\omega)$ are uniformly bounded $(i=1,2)$. Then there are $M>0$ and $\alpha>0$ such that
$$
\|e^{(H,H)t}\|_{X(\rho)}\leq M e^{\alpha t}
$$
and $|a_{i}^{n}( x,t;\theta _{t_0}\omega)|\leq M$, $ |b_{i}^{n}( x,t;\theta _{t_0}\omega)|\leq M$.
Hence,
\begin{align*}
&(u^n(\cdot,t;\theta_{t_0}\omega),v^n(\cdot,t;\theta_{t_0}\omega))\\&=e^{(H,H)t}(u^n(\cdot,0;\theta_{t_0}\omega),v^n(\cdot,0;\theta_{t_0}\omega))\\
&\ \ \ +\int_0^te^{(H,H)(t-\tau)}[a^n_1(\cdot,\tau;\theta_{t_0}\omega)u^n(\cdot,\tau;\theta_{t_0}\omega)+b^n_1(\cdot,\tau;\theta_{t_0}\omega)v^n(\cdot,\tau;\theta_{t_0}\omega),\\
&\ \ \ \ \ \ \ \ \ \ \  a^n_2(\cdot,\tau;\theta_{t_0}\omega)u^n(\cdot,\tau;\theta_{t_0}\omega)+b^n_2(\cdot,\tau;\theta_{t_0}\omega)v^n(\cdot,\tau;\theta_{t_0}\omega)]d\tau
\end{align*}
and then
\begin{align*}
&\|(u^n(\cdot,t;\theta_{t_0}\omega),v^n(\cdot,t;\theta_{t_0}\omega))\|_{X(\rho)}\\
&\leq M e^{\alpha t}\|(u^n(\cdot,0;\theta_{t_0}\omega),v^n(\cdot,0;\theta_{t_0}\omega))\|_{X(\rho)}\\
&\ \ \ +M^2\int_0^ t e^{\alpha(t-\tau)}\|(u^n(\cdot,\tau;\theta_{t_0}\omega),v^n(\cdot,\tau;\theta_{t_0}\omega))\|_{X(\rho)}d\tau.
\end{align*}
By Gronwall's inequality,
$$ \|(u^n(\cdot,t;\theta_{t_0}\omega),v^n(\cdot,t;\theta_{t_0}\omega))\|_{X(\rho)}\leq e^{( \alpha +M^2)t}M\|(u^n(\cdot,0;\theta_{t_0}\omega),v^n(\cdot,0;\theta_{t_0}\omega))\|_{X(\rho)}. $$
Note that $\|(u^n(\cdot,0;\theta_{t_0}\omega),v^n(\cdot,0;\theta_{t_0}\omega))\|_{X(\rho)}\to 0$ as $n\to\infty$ uniformly in $t_0\in\R$. It then follows that
$$
(u^n(x,t;\theta_{t_0}\omega),v^n(x,t;\theta_{t_0}\omega))\to(0,0) \quad {\rm as}\quad
n\to\infty
$$
uniformly in $x$ on bounded sets and $t_0\in\R$.
The Proposition thus follows.
\end{proof}

Now we present some lemmas including the technical results.

\begin{lemma}
\label{L-lemma}
$\underline{a}(\cdot), a(\cdot), \overline{a}(\cdot)\in\L^1(\Omega, \mathcal{F}, \mathbb{P})$. Also $\underline{a}(\omega)$ and $\overline{a}(\omega)$ are independent of $\omega$ for a.e. $\omega\in\Omega$.
\end{lemma}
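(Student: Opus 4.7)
The plan is to combine the $\theta_t$-invariance of $\underline{a}(\omega)$ and $\overline{a}(\omega)$ already displayed just before the lemma with the ergodicity of $(\Omega,\mathcal{F},\mathbb{P},\{\theta_t\})$, and then to identify the resulting almost-sure constants via Birkhoff's pointwise ergodic theorem. The $L^1$ claim for $a(\cdot)$ itself is inherited from the boundedness of the coefficients $a_i,b_i,c_i$ and of the equilibrium $v^*(\cdot;\omega)$ built into the standing setup.

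First I would verify measurability: the rational-parameter representation
$$
\underline{a}(\omega)=\liminf_{\substack{t,s\in\mathbb{Q}\\ t-s\to\infty}}\frac{1}{t-s}\int_s^t a(\theta_\tau\omega)\,d\tau,
$$
together with its $\limsup$ counterpart, exhibits $\underline{a}$ and $\overline{a}$ as countable liminf/limsup of jointly measurable functions of $\omega$ (each finite-$t,s$ average being measurable by Fubini), and hence both are $\mathcal{F}$-measurable. The pointwise identity $\underline{a}(\theta_t\omega)=\underline{a}(\omega)$ (and its analogue for $\overline{a}$) shows that these random variables are $\theta_t$-invariant; since $\{\theta_t\}$ is ergodic, every such invariant function is almost surely constant.

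To identify these constants (and confirm finiteness, hence $L^1$-membership) I would invoke Birkhoff's pointwise ergodic theorem for the continuous-time flow $\theta_t$ applied to the $L^1$ function $a$: on a $\theta_t$-invariant full-measure set $\Omega_0$, $T^{-1}\int_0^T a(\theta_\tau\omega)\,d\tau\to \int_\Omega a\,d\mathbb{P}$ as $T\to\infty$, and analogously as $T\to -\infty$. Using the decomposition
$$
\frac{1}{t-s}\int_s^t a(\theta_\tau\omega)\,d\tau=\frac{t}{t-s}\cdot\frac{1}{t}\int_0^t a(\theta_\tau\omega)\,d\tau-\frac{s}{t-s}\cdot\frac{1}{s}\int_0^s a(\theta_\tau\omega)\,d\tau
$$
(with the obvious sign conventions when $s<0$), and splitting into the subcases where one of $s,t$ stays bounded while the other escapes, and where both tend to $\pm\infty$ (so that $\frac{t}{t-s}-\frac{s}{t-s}=1$ yields cancellation), one checks that the two-parameter average has the same almost-sure limit $\int_\Omega a\,d\mathbb{P}$ as $t-s\to\infty$. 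Therefore $\underline{a}(\omega)=\overline{a}(\omega)=\int_\Omega a\,d\mathbb{P}$ for all $\omega\in\Omega_0$, and these constant functions trivially lie in $L^1(\Omega,\mathcal{F},\mathbb{P})$.

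The only (mild) obstacle is reconciling the two-parameter $\liminf/\limsup$ in the definitions of $\underline{a},\overline{a}$ with the single-parameter Birkhoff statement; the algebraic decomposition above and the short case analysis accompanying it dispose of this point cleanly, after which the ergodicity-plus-invariance argument furnishes both the $\omega$-independence and the $L^1$ integrability in one stroke.
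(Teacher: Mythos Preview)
Your invariance-plus-ergodicity argument for the almost-sure constancy of $\underline{a}$ and $\overline{a}$ is correct and is indeed the essential point (the paper itself just cites \cite{Sh18}). However, the Birkhoff step you use to \emph{identify} the constants contains a genuine error. In the decomposition
\[
\frac{1}{t-s}\int_s^t a(\theta_\tau\omega)\,d\tau=\frac{t}{t-s}\cdot\frac{1}{t}\int_0^t a(\theta_\tau\omega)\,d\tau-\frac{s}{t-s}\cdot\frac{1}{s}\int_0^s a(\theta_\tau\omega)\,d\tau,
\]
the cancellation you describe handles the leading terms, but the error terms are $\frac{t}{t-s}\,o(1)-\frac{s}{t-s}\,o(1)$, and the coefficients $\frac{t}{t-s},\frac{s}{t-s}$ need not stay bounded (take $s=n$, $t=n+\sqrt{n}$). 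So the two-parameter limit does \emph{not} follow from the one-parameter Birkhoff limit.

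In fact your conclusion $\underline{a}=\overline{a}=\int_\Omega a\,d\mathbb{P}$ is false in general. For the Bernoulli shift on $\{0,1\}^{\mathbb{Z}}$ with $a(\omega)=\omega_0$, almost every $\omega$ contains arbitrarily long runs of zeros and of ones; taking $[s,t]$ inside such runs shows $\underline{a}=0$ and $\overline{a}=1$, while $\int a\,d\mathbb{P}=\tfrac12$. The paper is consistent with this: it treats $\underline{a}$ and $\overline{a}$ as possibly distinct constants throughout (see the display just before the lemma, and the hypotheses {\bf(H2)}, {\bf(H3)}). Since you only invoked the Birkhoff identification to secure finiteness, the fix is easy: the standing boundedness of the coefficients gives $|\underline{a}(\omega)|,|\overline{a}(\omega)|\le\|a\|_\infty$ directly, and together with a.e.\ constancy this yields $L^1$-membership without any appeal to Birkhoff.
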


\begin{proof}
 It follows from \cite[Lemma 2.1]{Sh18}.
\end{proof}

\begin{lemma}\label{technical}
Suppose that for $\omega\in\Omega$, $a^\omega(t)=a(\theta_t\omega)\in C(\mathbb{R},(0,\infty))$. Then for a.e. $\omega\in\Omega$,
$$\underline{a} =\sup\limits_{A\in W_{loc}^{1,\infty}( \mathbb{R} ) \cap L^{\infty}( \mathbb{R} )}\essinf\limits_{t \in \mathbb{R}}( A^{\prime}+a^{\omega} )(t). $$
\end{lemma}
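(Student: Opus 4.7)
The plan is to prove the two inequalities $\sup_A \essinf_{t\in\mathbb{R}}(A' + a^\omega)(t) \le \underline{a}$ and $\sup_A \essinf_{t\in\mathbb{R}}(A' + a^\omega)(t) \ge \underline{a}$ separately, working on the full-measure subset $\Omega_0 \subset \Omega$ furnished by Lemma~\ref{L-lemma} on which $\liminf_{t-s\to\infty}\frac{1}{t-s}\int_s^t a^\omega(\tau)\,d\tau = \underline{a}$.

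For the $\le$ direction, I fix any admissible $A$ and any $s<t$; absolute continuity of $A$ on $[s,t]$ gives
$$\essinf_{\tau\in\mathbb{R}}(A' + a^\omega)(\tau) \le \frac{1}{t-s}\int_s^t(A'+a^\omega)(\tau)\,d\tau = \frac{A(t)-A(s)}{t-s} + \frac{1}{t-s}\int_s^t a^\omega(\tau)\,d\tau,$$
and since $A\in L^\infty(\mathbb{R})$ the boundary term vanishes as $t-s\to\infty$; passing to liminf on the right yields $\essinf_{\tau}(A'+a^\omega)(\tau) \le \underline{a}$.

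For the $\ge$ direction, given $\varepsilon > 0$ and $\omega\in\Omega_0$, the goal is to exhibit a single admissible $A = A_\varepsilon^\omega$ with $A' + a^\omega \ge \underline{a} - \varepsilon$ a.e. The case $\varepsilon \ge \underline{a}$ is trivial ($A\equiv 0$ works, using $a^\omega>0$). Otherwise set $g^\omega := a^\omega - (\underline{a}-\varepsilon)$; by the liminf characterization there is $R=R(\omega,\varepsilon)>0$ with $\int_s^t g^\omega \ge \frac{\varepsilon}{2}(t-s)$ for all $t-s \ge R$. Define
$$G(t) := \int_0^t g^\omega(\tau)\,d\tau, \qquad M(t) := \sup_{s \le t} G(s), \qquad B := G - M, \qquad A := -B.$$
The lower bound on $\int_s^t g^\omega$ forces $G(s) \to -\infty$ as $s\to-\infty$, so $M$ is finite, and
$$B(t) = \inf_{s \le t}\int_s^t g^\omega(\tau)\,d\tau \in \bigl[-(\underline{a}-\varepsilon)R,\; 0\bigr],$$
after separating $t-s<R$ (where positivity of $a^\omega$ gives $\int_s^t g^\omega \ge -(\underline{a}-\varepsilon)R$) from $t-s\ge R$ (where $\int_s^t g^\omega \ge 0$). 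The continuous nondecreasing function $M$ satisfies the elementary estimate $0 \le M(t_2)-M(t_1) \le (t_2-t_1)\,\|g^\omega\|_{L^\infty([t_1,t_2])}$, proved by separating the cases in which the running supremum is or is not refreshed on $(t_1,t_2]$; hence $M$, $B$, and $A$ all lie in $W^{1,\infty}_{\mathrm{loc}}(\mathbb{R})\cap L^\infty(\mathbb{R})$. Since $M'\ge 0$ a.e., I obtain $A' + a^\omega = -g^\omega + M' + a^\omega = (\underline{a}-\varepsilon) + M' \ge \underline{a}-\varepsilon$ a.e., and letting $\varepsilon\downarrow 0$ closes the argument.

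The main obstacle will be verifying that $M\in W^{1,\infty}_{\mathrm{loc}}(\mathbb{R})$: continuous nondecreasing functions need not be absolutely continuous (e.g.\ the Cantor function), so I must exploit the local boundedness of the continuous integrand $g^\omega$ to upgrade ``monotone continuous'' to ``locally Lipschitz''. Everything else reduces to routine bookkeeping around the definition of $\underline{a}$.
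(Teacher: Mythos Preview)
Your proof is correct. The paper does not actually prove this lemma: its entire argument is the sentence ``It follows from \cite[Lemma 2.2]{Sh18} and Lemma \ref{L-lemma},'' deferring the substance to the Salako--Shen reference. What you have written is a clean self-contained version of the standard running-supremum construction that underlies such results, so in spirit you are supplying the details the paper omits rather than taking a genuinely different route. Your handling of the one nontrivial point---upgrading the monotone continuous function $M$ to $W^{1,\infty}_{\mathrm{loc}}$ via the Lipschitz estimate $0\le M(t_2)-M(t_1)\le (t_2-t_1)\|g^\omega\|_{L^\infty([t_1,t_2])}$, using the local boundedness of the continuous integrand---is exactly what is needed and is carried out correctly.
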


\begin{proof}
It follows from \cite[Lemma 2.2] {Sh18} and Lemma \ref{L-lemma}.
\end{proof}

Note that by (H3) there is a strictly positive solution $h(t;\omega)$ of
$$
\frac{dv}{dt}-(a_2(\theta _t\omega) -2c_2( \theta _t\omega ) v^*( t;\omega )) v-b_2( \theta _t\omega) v^*( t;\omega) =-( a_1( \theta _t\omega) -c_1( \theta _t\omega)v^*(t;\omega )) v.
$$

\begin{lemma}
\label{sub-solution-lemma}
Let $\omega\in\Omega_0$ and $0<\sigma\ll 1$. Then for any $\mu$, $\tilde{\mu}$ with $ 0<\mu <\tilde{\mu}<\min\{ 2\mu ,\mu ^* \},$
there exist $ \{  t_k \} _{k\in \mathbb{Z}} $ with $ t_k<t_{k+1} $
and $ \underset{k\rightarrow \pm \infty}{\lim}t_k=\pm \infty  $, $ A_{\omega}\in W_{loc}^{1,\infty}( \mathbb{R}) \cap L^{\infty}( \mathbb{R})$ with $ A_{\omega}(\cdot) \in C^1(( t_k,t_{k+1}))  $ for $ k\in \mathbb{Z} $, and $ d_{\omega}>0 $ such that for any $ d\ge d_{\omega} $  the functions
\begin{equation*}
\setlength\abovedisplayskip{1pt}
\setlength\belowdisplayskip{1pt}
\begin{aligned}
&\tilde{u}( x,t, \omega) :=e^{-\mu ( x-\int_0^t{c( s ;\omega ,\mu ) ds} )}-de^{( \frac{\tilde{\mu}}{\mu}-1 ) A_{\omega}( t ) -\tilde{\mu}( x-\int_0^t{c( s ;\omega ,\mu ) ds} )},\\
&\tilde{v}( x,t, \omega) :=\sigma e^{-\mu ( x-\int_0^t{c( s ;\omega ,\mu ) ds} )}h(t;\omega)-\sigma de^{( \frac{\tilde{\mu}}{\mu}-1 ) A_{\omega}( t ) -\tilde{\mu}( x-\int_0^t{c( s ;\omega ,\mu ) ds} )}h(t;\omega)
\end{aligned}
\end{equation*}
satisfy

\begin{equation*}
\setlength\abovedisplayskip{1pt}
\setlength\belowdisplayskip{1pt}
\left\{
\begin{aligned}
\pa_t \tilde{u}\leq&H\tilde{u}+\tilde{u}(a_1(\theta _t\omega)- b_1(\theta_t\omega)\tilde{u}-c_1(\theta _t\omega)(v^*(t;\omega)-\tilde{v})) ,\\
\pa_t \tilde{v}\leq&H\tilde{v}+b_2(\theta _t\omega)(v^*(t;\omega)-\tilde{v})\tilde{u}+\tilde{v}(a_2(\theta _t\omega)- 2c_2(\theta _t\omega)v^*(t;\omega)+c_2(\theta _t\omega)\tilde{v}),
\end{aligned}
\right.
\end{equation*}
for $ t\in ( t_k,t_{k+1} )$, $ x\ge \int_0^t{c( s ;\omega ,\mu ) ds}+\frac{\ln d}{\tilde{\mu}-\mu}+\frac{A_{\omega}\left( t \right)}{\mu} $, $ k\in \mathbb{Z} $.
 \end{lemma}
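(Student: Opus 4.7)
The plan is to verify both sub-solution inequalities by direct substitution in the moving variable $\xi := x - \int_0^t c(s;\omega,\mu)\,ds$, using in order: the dispersion relation $\mu c(t;\omega,\mu) = F(\mu) + \alpha(t)$ where $F(\mu) := e^\mu + e^{-\mu} - 2$ and $\alpha(t) := a_1(\theta_t\omega) - c_1(\theta_t\omega)v^*(t;\omega)$; the defining ODE for $h$; Lemma \ref{technical}; the convexity of $\mu \mapsto (F(\mu) + \underline\lambda)/\mu$ with minimum at $\mu^*$; the constraint $\tilde\mu < 2\mu$; and hypothesis (H3). I first observe that the \emph{principal parts} $\phi_0 := e^{-\mu\xi}$ and $\psi_0 := \sigma h(t;\omega)\phi_0$ satisfy $\partial_t\phi_0 = H\phi_0 + \alpha\phi_0$ identically (by the dispersion relation) and, using the defining ODE for $h$, $\partial_t\psi_0 - H\psi_0 - b_2 v^* \phi_0 - (a_2 - 2c_2 v^*)\psi_0 = (\sigma - 1)b_2(\theta_t\omega) v^*(t;\omega)\phi_0 \leq 0$; hence $(\phi_0,\psi_0)$ is a sub-solution of the linearization of \eqref{main-trans1-continuous} at $(0,0)$.

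Substituting $(\tilde u, \tilde v) = (\phi_0 - d\Psi,\;\sigma h\,\tilde u)$ with $\Psi := e^{(\tilde\mu/\mu - 1)A_\omega(t) - \tilde\mu\xi}$, the $\tilde u$-inequality is equivalent to
$$ d\,\Psi\,\Big[\Big(\tfrac{\tilde\mu}{\mu}-1\Big)\big(A'_\omega(t) + \alpha(t)\big) - K(\mu,\tilde\mu)\Big]\;\geq\;\big(b_1(\theta_t\omega) - \sigma c_1(\theta_t\omega) h(t;\omega)\big)\,\tilde u^2,$$
where $K(\mu,\tilde\mu) := F(\tilde\mu) - \tfrac{\tilde\mu}{\mu}F(\mu)$. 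The $\tilde v$-inequality, after dividing by $\sigma h > 0$ and using the bonus term $(\sigma - 1)b_2 v^* \tilde u \leq 0$ produced by $\sigma < 1$, reduces to the same inequality with $b_1 - \sigma c_1 h$ replaced by $b_2 - \sigma c_2 h$. Convexity of $\mu \mapsto (F(\mu)+\underline\lambda)/\mu$ together with $\mu < \tilde\mu < \mu^*$ yields the strict inequality $\frac{\mu F(\tilde\mu) - \tilde\mu F(\mu)}{\tilde\mu - \mu} < \underline\lambda$. Fix $\eta > 0$ so small that this inequality still holds with $\underline\lambda$ replaced by $\underline\lambda - \eta$. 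Since $\alpha$ has least mean $\underline\lambda$, Lemma \ref{technical} produces $A_\omega \in W^{1,\infty}_{\mathrm{loc}}(\R) \cap L^\infty(\R)$, of class $C^1$ on intervals $(t_k, t_{k+1})$ of a discrete partition of $\R$ (this is the source of the sequence $\{t_k\}$ in the statement), with $\essinf_{t}\big(A'_\omega(t) + \alpha(t)\big) \geq \underline\lambda - \eta$. The bracket in the displayed inequality is then uniformly $\geq \delta := \big(\tfrac{\tilde\mu}{\mu}-1\big)(\underline\lambda - \eta) - K(\mu,\tilde\mu) > 0$ on each $(t_k, t_{k+1})$.

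It remains to exploit $\tilde\mu < 2\mu$ to absorb the quadratic right-hand sides. In the region $\xi \geq \frac{\ln d}{\tilde\mu - \mu} + \frac{A_\omega(t)}{\mu}$ one has $0 \leq \tilde u \leq \phi_0 = e^{-\mu\xi}$, so $\tilde u^2 \leq e^{-2\mu\xi}$. Writing $e^{-2\mu\xi}/\Psi = e^{-(2\mu - \tilde\mu)\xi}\,e^{-(\tilde\mu/\mu - 1)A_\omega}$ and evaluating on the lower boundary $\xi = \frac{\ln d}{\tilde\mu - \mu} + \frac{A_\omega}{\mu}$ yields $e^{-2\mu\xi}/\Psi \leq d^{-(2\mu - \tilde\mu)/(\tilde\mu - \mu)}\,e^{-A_\omega}$, which is small for $d$ large because $A_\omega \in L^\infty$ and the exponent of $d$ is strictly negative (here $\tilde\mu < 2\mu$ is essential). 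The coefficients $b_i - \sigma c_i h$ are bounded (using (H3) to bound $h$ via $h' \leq b_2 v^*(1-h)$), so choosing $d_\omega$ sufficiently large makes the quadratic right-hand side smaller than $d\delta\Psi$ for all $d \geq d_\omega$, simultaneously for both inequalities. The main obstacle is to combine the convexity-based strict inequality $\frac{\mu F(\tilde\mu) - \tilde\mu F(\mu)}{\tilde\mu - \mu} < \underline\lambda$ with the measurable, piecewise-$C^1$ construction of $A_\omega$ from Lemma \ref{technical}, ensuring that $\{t_k\}$, $A_\omega$, and $d_\omega$ depend measurably on $\omega \in \Omega_0$; the remaining estimates are mechanical and enabled by the two structural constraints $\tilde\mu < \mu^*$ (the bracket can be made positive) and $\tilde\mu < 2\mu$ (the correction decays slower than the square of the principal part).
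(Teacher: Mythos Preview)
Your proposal is correct and follows essentially the same approach as the paper: a direct computation in the moving variable $\xi$, with the convexity inequality $\frac{\mu F(\tilde\mu)-\tilde\mu F(\mu)}{\tilde\mu-\mu}<\underline\lambda$ combined with Lemma~\ref{technical} to make the bracket positive, and then $\tilde\mu<2\mu$ to absorb the quadratic remainder for $d$ large. The only cosmetic difference is how the slack is introduced: the paper splits $\alpha=(1-\delta)\alpha+\delta\alpha$, uses Lemma~\ref{technical} on $(1-\delta)\alpha$ to force the bracket $\ge 0$, and reserves the leftover $\delta\alpha$-term to dominate $b_i e^{-(2\mu-\tilde\mu)\xi}$ directly (leading to the explicit $d_\omega=\max\{\max_{i,t}\frac{b_i}{\alpha}\cdot\frac{\mu}{\delta(\tilde\mu-\mu)}e^{-(\tilde\mu/\mu-1)\|A_\omega\|_\infty},\,e^{(\tilde\mu/\mu-1)\|A_\omega\|_\infty}\}$), whereas you apply Lemma~\ref{technical} with an additive margin $\eta$ to make the bracket $\ge\delta>0$ uniformly and then kill the quadratic by the decay $d^{-(2\mu-\tilde\mu)/(\tilde\mu-\mu)}$. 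Your additive variant is arguably cleaner in that it does not require $\alpha(t)=a_1(\theta_t\omega)-c_1(\theta_t\omega)v^*(t;\omega)>0$ pointwise (which the paper's $d_\omega$ formula tacitly uses), but the two arguments are otherwise interchangeable.
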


 \begin{proof}
For given $\omega\in\Omega_0$ and $ 0<\mu <\tilde{\mu}<\min \{  2\mu, {\mu }^* \} $, by the arguments in the proof of \cite[Lemma 5.1]{CaSh17} we can get that $$\frac{e^{\tilde{\mu}}+e^{-\tilde{\mu}}-2+\underline{\lambda}}{\tilde{\mu}}<\frac{e^{\mu}+e^{-\mu}-2+\underline{\lambda}}{\mu} ,$$ and hence $$ \underline{\lambda} >\frac{\mu ( e^{\tilde{\mu}}+e^{-\tilde{\mu}}-2 ) -\tilde{\mu}( e^{\mu}+e^{-\mu}-2 )}{\tilde{\mu}-\mu} .$$ Let $ 0<\delta \ll 1 $ be such that $$ ( 1-\delta )\underline{\lambda} >\frac{\mu ( e^{\tilde{\mu}}+e^{-\tilde{\mu}}-2 ) -\tilde{\mu}( e^{\mu}+e^{-\mu}-2 )}{\tilde{\mu}-\mu} .$$
It then follows from  Lemma \ref{technical} that there exist $ T>0 $ and $ A_{\omega}\in W_{loc}^{1,\infty}( \mathbb{R} ) \cap L^{\infty}( \mathbb{R} )$ such that $ A_{\omega}( \cdot ) \in C^1(( t_k,t_{k+1}))  $ with $ t_k=kT $ for $ k\in \mathbb{Z} $, and \begin{equation}\label{inequality-A(t)}
\setlength\abovedisplayskip{1pt}
\setlength\belowdisplayskip{1pt}
( 1-\delta ) (a_1( \theta _t\omega )-c_1(\theta _t\omega)v^*(t;\omega)) +A_{\omega}^{\prime}( t)\ge  \frac{\mu ( e^{\tilde{\mu}}+e^{-\tilde{\mu}}-2 ) -\tilde{\mu}( e^{\mu}+e^{-\mu}-2 )}{\tilde{\mu}-\mu}
\end{equation}
for all $ t\in ( t_k,t_{k+1})  $, $ k\in \mathbb{Z} $.

Now fix $\delta>0$ and $A_{\omega}( t )$ chosen in the above inequality.

Let $$ \xi ( x,t;\omega )=x-\int_0^t{c( s ;\omega ,\mu) ds} ,$$ $$ \tilde{u}( x,t, \omega)=e^{-\mu   \xi ( x,t;\omega ) }-de^{( \frac{\tilde{\mu}}{\mu}-1 ) A_{\omega}( t ) -\tilde{\mu}   \xi ( x,t;\omega  )} ,$$
and
$$ \tilde{v}( x,t, \omega)=\sigma e^{-\mu   \xi ( x,t;\omega ) }h(t;\omega)-\sigma de^{( \frac{\tilde{\mu}}{\mu}-1 ) A_{\omega}( t ) -\tilde{\mu}   \xi ( x,t;\omega  )}h(t;\omega)$$ with $ d>1 $ to be determined later.
Recall that $$c(t;\omega, \mu)=\frac{e^{\mu}+e^{-\mu}-2+ a_1( \theta _t\omega) -c_1( \theta _t\omega)v^*(t;\omega )}{\mu}.$$
\vspace{-1cm}
Then we have

\begin{align}
\label{more1}
\partial_t& \tilde{u}-[H\tilde{u}+\tilde{u}(a_1(\theta _t\omega)- b_1(\theta_t\omega)\tilde{u}-c_1(\theta _t\omega)(v^*(t;\omega)-\tilde{v}))]\nonumber\\
=&\mu c( t;\omega,\mu ) e^{-\mu \xi ( x,t;\omega )}+d[ -( \frac{\tilde{\mu}}{\mu}-1 ) A_{\omega}^{\prime}( t ) -\tilde{\mu}c( t;\omega,\mu ) ] e^{( \frac{\tilde{\mu}}{\mu}-1 ) A_{\omega}( t ) -\tilde{\mu}\xi ( x,t;\omega )}\nonumber\\
&-[( e^{\mu}+e^{-\mu}-2 ) e^{-\mu \xi ( x,t;\omega )}-d( e^{\tilde{\mu}}+e^{-\tilde{\mu}}-2 ) e^{( \frac{\tilde{\mu}}{\mu}-1 ) A_{\omega}( t ) -\tilde{\mu}\xi ( x,t;\omega)}] \nonumber\\
&-\tilde{u}[a_1(\theta _t\omega)- b_1(\theta_t\omega)\tilde{u}-c_1(\theta _t\omega)(v^*(t;\omega)-\tilde{v})]\nonumber\\
 =& d[ -( \frac{\tilde{\mu}}{\mu}-1 ) A_{\omega}^{\prime}( t ) -\tilde{\mu}c( t;\omega,\mu ) + e^{\tilde{\mu}}+e^{-\tilde{\mu}}-2 +a_1( \theta _t\omega )-c_1( \theta _t\omega )v^*(t;\omega) ]\nonumber\\
 &\times e^{( \frac{\tilde{\mu}}{\mu}-1 ) A_{\omega}( t ) -\tilde{\mu}\xi ( x,t;\omega )}+\tilde{ u}[b_1( \theta _t\omega )\tilde{ u}-c_1( \theta _t\omega )\tilde{ v}] \nonumber\\
=&d( \frac{\tilde{\mu}}{\mu}-1 ) [ \frac{\mu ( e^{\tilde{\mu}}+e^{-\tilde{\mu}}-2 ) -\tilde{\mu}( e^{\mu}+e^{-\mu}-2 )}{\tilde{\mu}-\mu}-( 1-\delta )(a_1( \theta _t\omega )-c_1( \theta _t\omega )v^*(t;\omega))-A_{\omega}^{\prime}( t ) ]\nonumber\\
&\times e^{( \frac{\tilde{\mu}}{\mu}-1 ) A_{\omega}( t ) -\tilde{\mu}\xi ( x,t;\omega )}+ \tilde{ u}[b_1( \theta _t\omega )\tilde{ u}-c_1( \theta _t\omega )\sigma h(t;\omega)\tilde{ u}]\nonumber\\
&-\delta d( \frac{\tilde{\mu}}{\mu}-1 )(a_1( \theta _t\omega )-c_1( \theta _t\omega )v^*(t;\omega)) e^{( \frac{\tilde{\mu}}{\mu}-1 ) A_{\omega}( t ) -\tilde{\mu}\xi ( x,t;\omega )}\nonumber\\
\leq &d( \frac{\tilde{\mu}}{\mu}-1 ) [ \frac{\mu ( e^{\tilde{\mu}}+e^{-\tilde{\mu}}-2 ) -\tilde{\mu}( e^{\mu}+e^{-\mu}-2 )}{\tilde{\mu}-\mu}-( 1-\delta )(a_1( \theta _t\omega )-c_1( \theta _t\omega )v^*(t;\omega))-A_{\omega}^{\prime}( t ) ]\nonumber\\
&\times e^{( \frac{\tilde{\mu}}{\mu}-1 ) A_{\omega}( t ) -\tilde{\mu}\xi ( x,t;\omega )}+b_1(\theta_t\omega)\tilde{u}^2-\delta d( \frac{\tilde{\mu}}{\mu}-1 )(a_1( \theta _t\omega )-c_1( \theta _t\omega )v^*(t;\omega)) e^{( \frac{\tilde{\mu}}{\mu}-1 ) A_{\omega}( t ) -\tilde{\mu}\xi ( x,t;\omega )}\nonumber\\
=&d( \frac{\tilde{\mu}}{\mu}-1 ) [ \frac{\mu ( e^{\tilde{\mu}}+e^{-\tilde{\mu}}-2 ) -\tilde{\mu}( e^{\mu}+e^{-\mu}-2 )}{\tilde{\mu}-\mu}-( 1-\delta )(a_1( \theta _t\omega )-c_1( \theta _t\omega )v^*(t;\omega))-A_{\omega}^{\prime}( t ) ]\nonumber\\
&\times e^{( \frac{\tilde{\mu}}{\mu}-1 ) A_{\omega}( t ) -\tilde{\mu}\xi ( x,t;\omega)}-[d\delta( \frac{\tilde{\mu}}{\mu}-1 )e^{( \frac{\tilde{\mu}}{\mu}-1 ) A_{\omega}( t )}(a_1( \theta _t\omega )
-c_1( \theta _t\omega )v^*(t;\omega))\nonumber\\
&-b_1(\theta_t\omega)e^{-(2\mu-\tilde{\mu})\xi ( x,t;\omega)}] e^{-\tilde{\mu}\xi( x,t;\omega )}+d[ -2e^{-\mu \xi ( x,t;\omega )}+de^{( \frac{\tilde{\mu}}{\mu}-1 ) A_{\omega}( t ) -\tilde{\mu}\xi ( x,t;\omega )} ]\nonumber\\ &\times e^{( \frac{\tilde{\mu}}{\mu}-1 ) A_{\omega}( t ) -\tilde{\mu}\xi ( x,t;\omega )}b_1( \theta _t\omega )
\end{align}
for $ t\in ( t_k,t_{k+1} )  $. Note that
\begin{align*}
\pa_t & \tilde{v}-[H\tilde{v}+b_2(\theta _t\omega)(v^*(t;\omega)-\tilde{v})\tilde{u}+\tilde{v}(a_2(\theta _t\omega)- 2c_2(\theta _t\omega)v^*(t;\omega)+c_2(\theta _t\omega)\tilde{v})]\nonumber\\
=&\sigma [ ( a_2( \theta _t\omega) -2c_2( \theta _t\omega ) v^*( t;\omega ) ) h( t;\omega ) +b_2( \theta _t\omega ) v^*( t;\omega ) -( a_1( \theta _t\omega ) -c_1( \theta _t\omega ) v^*( t;\omega ) ) h( t;\omega ) ] \tilde{u}\nonumber\\
&+\sigma h( t;\omega ) \pa_t \tilde{u}-\sigma h( t;\omega ) H\tilde{u}-b_2( \theta _t\omega ) v^*( t;\omega ) \tilde{u}+\sigma h( t;\omega ) \tilde{u}^2b_2( \theta _t\omega )\nonumber\\
&-\sigma h( t;\omega ) \tilde{u}( a_2( \theta _t\omega ) -2c_2( \theta _t\omega ) v^*( t;\omega ) +c_2( \theta _t\omega ) \tilde{v} )\nonumber\\
=&\sigma h( t;\omega ) \{ \partial _t\tilde{u}-[ H\tilde{u}+\tilde{u}( a_1( \theta _t\omega ) -b_2( \theta _t\omega ) \tilde{u}-c_1( \theta _t\omega ) v^*( t;\omega ) +c_2( \theta _t\omega ) \tilde{v} ) ]\} \nonumber\\
&+b_2( \theta _t\omega ) v^*( t;\omega ) \tilde{u}( \sigma -1 ).\nonumber\\
\end{align*}
Then by similar arguments as proving \eqref{more1},  we can get
\begin{align}
\label{more2}
\pa_t & \tilde{v}-[H\tilde{v}+b_2(\theta _t\omega)(v^*(t;\omega)-\tilde{v})\tilde{u}+\tilde{v}(a_2(\theta _t\omega)- 2c_2(\theta _t\omega)v^*(t;\omega)+c_2(\theta _t\omega)\tilde{v})]\nonumber\\
\leq & \{d( \frac{\tilde{\mu}}{\mu}-1 ) [ \frac{\mu ( e^{\tilde{\mu}}+e^{-\tilde{\mu}}-2 ) -\tilde{\mu}( e^{\mu}+e^{-\mu}-2 )}{\tilde{\mu}-\mu}-( 1-\delta )(a_1( \theta _t\omega )-c_1( \theta _t\omega )v^*(t;\omega))\nonumber\\
&-A_{\omega}^{\prime}( t ) ]e^{( \frac{\tilde{\mu}}{\mu}-1 ) A_{\omega}( t ) -\tilde{\mu}\xi ( x,t;\omega)}+[b_2(\theta_t\omega)e^{-(2\mu-\tilde{\mu})\xi ( x,t;\omega)}-d\delta( \frac{\tilde{\mu}}{\mu}-1 )e^{( \frac{\tilde{\mu}}{\mu}-1 ) A_{\omega}( t )}\nonumber\\
&\times (a_1( \theta _t\omega )-c_1( \theta _t\omega )v^*(t;\omega))]e^{-\tilde{\mu}\xi( x,t;\omega )}+d[ -2e^{-\mu \xi ( x,t;\omega )}+de^{( \frac{\tilde{\mu}}{\mu}-1 ) A_{\omega}( t ) -\tilde{\mu}\xi ( x,t;\omega )} ] \nonumber\\
&\times e^{( \frac{\tilde{\mu}}{\mu}-1 ) A_{\omega}( t ) -\tilde{\mu}\xi ( x,t;\omega )}b_2( \theta _t\omega )\}\sigma h( t;\omega ) +b_2( \theta _t\omega ) v^*( t;\omega ) \tilde{u}( \sigma -1 )
\end{align}
for $ t\in ( t_k,t_{k+1} )  $.

Let
$$d\ge d_{\omega}:= \max \{ \max\limits_{i\in\{1,2\},t\in\R} \{\frac{b_i( \theta _t\omega )}{a_1( \theta _t\omega ) -c_1( \theta _t\omega ) v^*( t;\omega )}\}\frac{\mu e^{-( \frac{\tilde{\mu}}{\mu}-1 )  \lVert A_{\omega} \rVert _{\infty}}}{\delta ( \tilde{\mu}-\mu )} , \ e^{( \frac{\tilde{\mu}}{\mu}-1 ) \lVert A_{\omega} \rVert _{\infty}} \}.$$
Then we have
$$ d\delta ( \frac{\tilde{\mu}}{\mu}-1 ) e^{( \frac{\tilde{\mu}}{\mu}-1 ) A_{\omega}( t )}(a_1( \theta _t\omega ) -c_1( \theta _t\omega ) v^*( t;\omega ))\ge b_i(\theta_t\omega) \ \  (i=1,2).$$ For this choice of $d$, if $\xi ( x,t;\omega ) =x-\int_0^t{c(s ;\omega ,\mu ) ds}\geq\frac{\ln d}{\tilde{\mu}-\mu}+\frac{A_{\omega}( t )}{\mu} $, which is equivalent to $\tilde{u}( x,t,\omega)\ge 0$ and $\tilde{v}( x,t,\omega)\ge 0$, then $\xi ( x,t;\omega )\ge 0$ and $de^{( \frac{\tilde{\mu}}{\mu}-1 ) A_{\omega}( t ) -\tilde{\mu}\xi ( x,t;\omega )}\leq e^{-\mu \xi ( x,t;\omega )}$. Together with \eqref{inequality-A(t)}, we get that each term the right hand side of (\ref{more1}) and (\ref{more2}) is less than or equal to zero. The lemma thus follows.
\end{proof}

For given function  $t\mapsto u(t)\in l^\infty(\Z)$ and $c\in\R$, we define
$$\limsup_{|i|\le ct, t\to\infty} u_i(t)=\limsup_{t\to\infty}\sup_{i\in\Z,|i|\le ct} u_i(t).$$

\begin{lemma}
\label{speed}
Let $(u^0,v^0)\in l^{\infty,+}(\Z)\times l^{\infty,+}(\Z)$. If there is a positive constant $c(\omega)>0$ such that
\begin{align}
\label{speed*}
\liminf_{s\in\R,|i|\le c(\omega)t, t\to\infty} u_i(t;u^0,v^0,\theta_s\omega)=\liminf_{t\to\infty}\inf_{s\in\R, i\in\Z,|i|\le c(\omega)t}u_i(t;u^0,v^0,\theta_s\omega) >0,
\end{align}
then for any $0<c<c(\omega)$,
\begin{equation}
\label{convergence-aux-eq1}
\limsup_{|i|\le ct,t\to\infty}[ |u_i(t;u^0,v^0,\theta_s\omega)-u^*(t+s;\omega)|+|v_i(t;u^0,v^0,\theta_s\omega)-v^*(t+s;\omega)|]=0
\end{equation}
uniformly in $s\in\R$.
\end{lemma}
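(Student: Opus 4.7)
The plan is to propagate the assumed uniform lower bound $u_i\ge\delta$ inside the cone $|i|\le c(\omega)t$ into full convergence to the cooperative-system equilibrium $(u^*,v^*)$ inside any strictly smaller cone $|i|\le ct$. The mechanism is a block-initial-data comparison argument that couples (H2) with the cooperativity of \eqref{main-eqn-trans1} in the invariant region $[0,u^*(\cdot;\omega)]\times[0,v^*(\cdot;\omega)]$. Note the hypothesis forces $u^0\not\equiv0$, since otherwise $u_i(t)\equiv0$ for all $t$.

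Fix $0<c<c(\omega)$ and $\varepsilon>0$. The hypothesis supplies $\delta>0$ and $T_0$ with $u_i(t;u^0,v^0,\theta_s\omega)\ge\delta$ for all $t\ge T_0$, $s\in\R$ and $|i|\le c(\omega)t$. Next I would invoke (H2), which through the substitution $\tilde v=v^*-v$ translates to uniform-in-$(i,s)$ convergence of the \eqref{main-eqn-trans1}-solution from the spatially constant datum $(\bar u,\bar v)\equiv(\delta/2,0)$ to $(u^*(\cdot+s;\omega),v^*(\cdot+s;\omega))$, to pick $T_1$ so that
\[
\bigl|u_i(T_1;\bar u,\bar v,\theta_s\omega)-u^*(T_1+s;\omega)\bigr|+\bigl|v_i(T_1;\bar u,\bar v,\theta_s\omega)-v^*(T_1+s;\omega)\bigr|<\varepsilon/2
\]
uniformly in $i$ and $s$. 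Proposition \ref{convergence-lemma} then lets me truncate to a finite block $(\bar u^{N_0},\bar v^{N_0})$, equal to $(\delta/2,0)$ on $|i|\le N_0$ and zero outside, so that the block solution at $i=0$, $t=T_1$ stays within $\varepsilon/2$ of the constant-data one, uniformly in $s$.

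For any $(i,t,s)$ with $|i|\le ct$ and $t$ so large that $t-T_1\ge T_0$ and $(c(\omega)-c)(t-T_1)\ge cT_1+N_0$, the $N_0$-neighborhood of $i$ lies inside $\{|j|\le c(\omega)(t-T_1)\}$, so $u_j(t-T_1;u^0,v^0,\theta_s\omega)\ge\delta>\delta/2$ for all $|j-i|\le N_0$. Combining the cocycle property, spatial translation invariance of \eqref{main-eqn-trans1}, and the comparison principle (Proposition \ref{comparison}) applied with the centered block datum, I obtain
\[
u_i(t;u^0,v^0,\theta_s\omega)\ge u_0(T_1;\bar u^{N_0},\bar v^{N_0},\theta_{s+t-T_1}\omega)\ge u^*(t+s;\omega)-\varepsilon,
\]
and analogously $v_i(t;u^0,v^0,\theta_s\omega)\ge v^*(t+s;\omega)-\varepsilon$, uniformly in $s$. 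For the matching upper bounds, $(u^*,v^*)$ is itself an entire solution of \eqref{main-eqn-trans1} and the cooperative comparison forces $v_i\le v^*$ once the solution lies in the invariant region (which (H2) guarantees after a fixed transient), while $u_i$ is separately dominated by the solution of the single-species equation \eqref{single specie 1}, whose uniform convergence to $u^*$ is the random analog of \cite[Theorem 1.1]{CaSh17}. Sandwiching yields $|u_i-u^*|+|v_i-v^*|\le 2\varepsilon$ throughout $|i|\le ct$ for $t$ large, uniformly in $s$.

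The principal delicacy is verifying that the uniform-in-$t_0$ convergence furnished by (H2) for the competitive system \eqref{main-eqn} really does pass intact to uniform-in-$s$ convergence for the cooperative system \eqref{main-eqn-trans1}, because the change of variables $\tilde v=v^*(t;\omega)-v$ itself carries a $\theta_t$-dependence and the transformed initial datum corresponding to a fixed original datum varies with $t_0$. On the full-measure invariant set $\Omega_0$, however, the boundedness and $\theta_t$-equivariance of $v^*(\cdot;\omega)$ carry this uniformity through; once this is handled, sending $\varepsilon\downarrow0$ delivers \eqref{convergence-aux-eq1}.
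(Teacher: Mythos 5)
Your proposal is correct and follows essentially the same route as the paper's proof: the cone lower bound is propagated by comparing with truncated spatially homogeneous block data, with (H2) giving the uniform-in-$t_0$ stability of the homogeneous solution of \eqref{main-eqn-trans1}, Proposition \ref{convergence-lemma} justifying the finite-block truncation, and the comparison principle plus translation invariance transferring the estimate into the cone $|i|\le ct$ — the paper merely packages this as a contradiction argument along a sequence $(i_n,t_n,s_n)$ with blocks of growing size instead of your direct argument with a fixed $N_0$. The only minor deviation is the upper bound, which the paper gets by comparing with the constant datum $(\|u^0\|,\|v^0\|)$ and invoking the global stability in (H2) (its \eqref{convergence-aux-eq4-1}), whereas you use $v\le v^*$ together with scalar domination by \eqref{single specie 1}; both are legitimate, and the delicacy you flag about the $t_0$-dependence introduced by the change of variables is treated just as implicitly in the paper as in your sketch.
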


\begin{proof}
Let $\omega\in\Omega_0$ and $c(\omega)$ satisfy \eqref{speed*}. Denote
$$
\delta_0=\liminf_{s\in\R,|i|\le c(\omega)t, t\to\infty} u_i(t;u^0,v^0,\theta_s\omega).
$$
Then there is $T\gg1$ such that
\begin{align}
\label{value1}
\inf_{ |i|\le c(\omega)t} u_i(t;u^0,v^0,\theta_s\omega)\ge \frac{\delta_0}{2},\quad \forall\,\, s\in\R,\,\, t\ge T.
\end{align}
Suppose by contradiction that there is $0<c_0<c(\omega)$ such that \eqref{convergence-aux-eq1} does not hold. Then there are $\epsilon_0>0$,
$s_n\in\R$, $i_n\in\Z$, $t_n>0$ such that $|i_n|\le c_0 t_n$, $t_n\to\infty$, and
\begin{equation}
\label{convergence-aux-eq3}
|u_{i_n}(t_n;u^0,v^0,\theta_{s_n}\omega)-u^*(t_n+s_n;\omega)|+|v_{i_n}(t_n;u^0,v^0,\theta_{s_n}\omega)-v^*(t_n+s_n;\omega)|\ge\epsilon_0.
\end{equation}
Let $(\tilde  u^0,\tilde  v^0)=\{(\tilde u^0_i,\tilde v^0_i)\}$ and $(\hat u^0,\hat v^0)=\{(\hat u^0_i,\hat v^0_i)\}$, where $\tilde u^0_i=\frac{\delta_0}{2}$, $\tilde v^0_i=0$, $\hat u^0_i=\|u^0\|$ and $\hat v^0_i=\|v^0\|$ for all $i\in\Z$. By the global stability of $(u^*(t;\omega),v^*(t;\omega))$, there is $\tilde T\ge T$ such that
\begin{equation}
\label{convergence-aux-eq4}
|u_i(t;\tilde{u}^0,\tilde{v}^0,\theta_{s_n}\omega)-u^*(t+s;\omega)|+|v_i(t;\tilde{u}^0,\tilde{v}^0,\theta_{s_n}\omega)-v^*(t+s;\omega)|<\frac{\epsilon_0}{4}
\end{equation}  for all  $i\in\Z, s\in\R, t\ge \tilde T$, 
and
\begin{align}
\label{convergence-aux-eq4-1}
u_i(t;u^0,v^0,\theta_s\omega)&\le u_i(t;\hat u^0,\hat v^0,\theta_s\omega)< u^*(t+s;\omega)+\frac{\epsilon_0}{2},\nonumber\\
v_i(t;u^0,v^0,\theta_s\omega)&\le v_i(t;\hat u^0,\hat v^0,\theta_s\omega)< v^*(t+s;\omega)+\frac{\epsilon_0}{2},\quad \forall\,\, i\in\Z,\,\, s\in\R,\,\, t\ge \tilde T.
\end{align}
Observe that $(c(\omega ) -c_0)( t_n-\tilde{T}) -2c_0\tilde{T}\rightarrow \infty \ as\ n\rightarrow \infty$. Hence there is $N$ such that
$$(c(\omega ) -c_0)( t_n-\tilde{T}) -2c_0\tilde{T}\geq T,\ \ \ \forall n\geq N.$$
For every $n\geq N$, let $\tilde u^n=\{\tilde u^n_i\}\in l^\infty(\Z)$ with $\|\tilde{u}^n \|\leq \frac{\delta_0}{2}$ and
\begin{align}
\label{value2}
\tilde u^n_i&=\begin{cases} \frac{\delta_0}{2},\ \ \  |i|\le (c(\omega ) -c_0)( t_n-\tilde{T}) -2c_0\tilde{T},\cr
0,\ \ \ \ |i|\geq (c(\omega ) -c_0)( t_n-\tilde{T}) -c_0\tilde{T},
\end{cases} \nonumber\\
\tilde{v}^n&\equiv0.
\end{align}
Since $|i|\le (c(\omega ) -c_0)( t_n-\tilde{T}) -c_0\tilde{T}$ implies that $$|i+i_n|\leq c(\omega)(t_n-\tilde{T}) \ \ \mathrm{for} \ \mathrm{every} \ n\geq N,$$ it follows from \eqref{value1} and \eqref{value2} that
$$\tilde{u}^n_i\leq u_{i+i_n}(t_n-\tilde{T};u^0,v^0,\theta_{s_n}\omega), \ \forall i\in\Z,\ \forall n\geq N.$$
Note that
$$\tilde{v}^n_i=0\leq v_{i+i_n}(t_n-\tilde{T};u^0,v^0,\theta_{s_n}\omega),\ \forall i\in\Z,\ \forall n\geq N.$$
Then by comparison principle, we have
\begin{align}
\label{value3}
u_i( t;\tilde{u}^n,\tilde{v}^n,\theta _{\tilde{s}_n}\omega) \le u_{i+i_n}( t+t_n-\tilde{T};u^0,v^0,\theta _{s_n}\omega ) ,\ \ \forall i\in \mathbb{Z},\ t>0,\ n\ge N,
\end{align}
and
\begin{align}
\label{value4}
v_i( t;\tilde{u}^n,\tilde{v}^n,\theta _{\tilde{s}_n}\omega) \le v_{i+i_n}( t+t_n-\tilde{T};u^0,v^0,\theta _{s_n}\omega ) ,\ \ \forall i\in \mathbb{Z},\ t>0,\ n\ge N,
\end{align}
where $\tilde{s}_n=s_n+t_n-\tilde{T}$. It follows from the definition of $(\tilde{u}^n,\tilde{v}^n)$ that $$\lim_{n\to\infty}(\tilde{u}^n,\tilde{v}^n)=(\tilde{u}^0,\tilde{v}^0)\ \ \mathrm{locally}\ \mathrm{uniformly}\ \mathrm{in}\  i\in\Z.$$
Therefore, from Proposition \ref{convergence-lemma} we have that for every $t>0$,
\begin{align}
\label{value5}
&\lim_{n\rightarrow\infty}[|u_i( t;\tilde{u}^n,\tilde{v}^n,\theta _{\tilde{s}_n}\omega)-u_i( t;\tilde{u}^0,\tilde{v}^0,\theta _{\tilde{s}_n}\omega)|\nonumber\\ &\ \ \ \  \ \ \  +|v_i( t;\tilde{u}^n,\tilde{v}^n,\theta _{\tilde{s}_n}\omega)-v_i( t;\tilde{u}^0,\tilde{v}^0,\theta _{\tilde{s}_n}\omega)|]=0
\end{align}
locally uniformly in $i\in\Z$.
It then follows from \eqref{convergence-aux-eq4}, \eqref{value3}, \eqref{value4} and \eqref{value5} that
\begin{align*}
u^*( s_n+t_n;\omega ) -\frac{\epsilon _0}{2}<u_0( \tilde{T};\tilde{u}^n,\tilde{v}^n,\theta _{\tilde{s}_n}\omega ) \le u_{i_n}( t_n;u^0,v^0,\theta _{s_n}\omega )
\end{align*}
and
\begin{align*}
v^*( s_n+t_n;\omega ) -\frac{\epsilon _0}{2}<v_0( \tilde{T};\tilde{u}^n,\tilde{v}^n,\theta _{\tilde{s}_n}\omega ) \le v_{i_n}( t_n;u^0,v^0,\theta _{s_n}\omega )\ \ \ \mathrm{for}\ \ n\gg1.
\end{align*}
Note that by \eqref{convergence-aux-eq4-1} we have
\begin{align*}
u_{i_n}( t_n;u^0,v^0,\theta _{s_n}\omega )<u^*( s_n+t_n;\omega ) +\frac{\epsilon _0}{2}
\end{align*}
and
\begin{align*}
v_{i_n}( t_n;u^0,v^0,\theta _{s_n}\omega )<v^*( s_n+t_n;\omega ) +\frac{\epsilon _0}{2}\ \ \ \mathrm{for}\ \ n\gg1.
\end{align*}
Then
$$|u_{i_n}( t_n;u^0,v^0,\theta _{s_n}\omega )- u^*( s_n+t_n;\omega )|+|v_{i_n}( t_n;u^0,v^0,\theta _{s_n}\omega )- v^*( s_n+t_n;\omega )|<\epsilon_0$$
for $n\gg1$, which contradicts to \eqref{convergence-aux-eq3}. Hence \eqref{convergence-aux-eq1} holds.
\end{proof}

\medskip

\section{Random transition fronts}

In this section, we study the existence and non-existence of random transition fronts, and prove
Theorem \ref{exist-thm}.

For any $\gamma>c_0$, let $0<\mu<\mu^*$ be such that $\frac{e^{\mu }+e^{-\mu}-2+\underline{\lambda}}{\mu}=\gamma $, where  $\underline{\lambda} =\underline{a_1(\omega ) -c_1( \omega )v^*( \cdot;\omega )}$ for $\omega\in\Omega_0$. For every $\omega\in\Omega$, denote  $c(t;\omega, \mu)=\frac{e^{\mu}+e^{-\mu}-2+(a_1( \theta _t\omega ) -c_1( \theta _t\omega) v^*( t;\omega ) )}{\mu}$ and $ \hat{u}^{\mu}( x,t;\omega ) =e^{-\mu ( x-\int_0^t{c( s ;\omega ,\mu ) ds} )}$.
Then $ \hat{u}^{\mu}( x,t;\omega)$ satisfies
\begin{align*}
\partial &_t\hat{u}^{\mu}( x,t;\omega ) -H\hat{u}^{\mu}( x,t;\omega ) -(a_1( \theta _t\omega ) -c_1( \theta _t\omega) v^*( t;\omega ) ) \hat{u}^{\mu}( x,t;\omega )\\
=&\hat{u}^{\mu}( x,t;\omega ) [ \mu c( t;\omega ,\mu ) -( e^{\mu}+e^{-\mu}-2)+(a_1( \theta _t\omega ) -c_1( \theta _t\omega) v^*( t;\omega ) ) ]=0,\ \ \text{for\ }x\in \mathbb{R},\ t\in \mathbb{R}.
\end{align*}
Then we have that
\begin{equation}
\begin{aligned}
\pa_t &\hat{u}^\mu-H\hat{u}^\mu-\hat{u}^\mu(a_1(\theta _t\omega)- b_1(\theta_t\omega)\hat{u}^\mu-c_1(\theta _t\omega)(v^*(t;\omega)-\hat{u}^\mu))\nonumber\\
=&\hat{u}^\mu[\mu c( t;\omega ,\mu )-( e^{\mu}+e^{-\mu}-2)-(a_1( \theta _t\omega ) -c_1( \theta _t\omega) v^*( t;\omega ))]+\hat{u}^\mu(b_1( \theta _t\omega ) -c_1( \theta _t\omega))\hat{u}^\mu\nonumber\\
=&\hat{u}^\mu (b_1( \theta _t\omega ) -c_1( \theta _t\omega))\hat{u}^\mu\nonumber\\
\geq&0,\nonumber\\
\end{aligned}
\end{equation}
and
\begin{equation}
\begin{aligned}
\pa_t &\hat{u}^\mu-H\hat{u}^\mu-b_2(\theta _t\omega)(v^*(t;\omega)-\hat{u}^\mu)\hat{u}^\mu-\hat{u}^\mu(a_2(\theta _t\omega)- 2c_2(\theta _t\omega)v^*(t;\omega)+c_2(\theta _t\omega)\hat{u}^\mu)\nonumber\\
=&\mu c( t;\omega ,\mu )\hat{u}^\mu-( e^{\mu}+e^{-\mu}-2)\hat{u}^\mu-b_2(\theta _t\omega)v^*(t;\omega)\hat{u}^\mu+\hat{u}^\mu b_2(\theta _t\omega)\hat{u}^\mu\nonumber\\
&-(a_2(\theta _t\omega)- 2c_2(\theta _t\omega)v^*(t;\omega))\hat{u}^\mu-\hat{u}^\mu c_2(\theta _t\omega)\hat{u}^\mu\nonumber\\
=&[a_1( \theta _t\omega ) -c_1( \theta _t\omega ) v^*( t;\omega )-(a_2( \theta _t\omega ) -2c_2( \theta _t\omega ) v^*( t;\omega )+b_2(\theta_t\omega)v^*(t;\omega))]\hat{u}^\mu\nonumber\\
&+\hat{u}^\mu (b_2( \theta _t\omega )-c_2( \theta _t\omega ))\hat{u}^\mu\nonumber\\
\geq&0\ \ \  \mathrm{for}\ \ x\in \mathbb{R},\ t\in \mathbb{R}.\nonumber\\
\end{aligned}
\end{equation}
Hence, $ (\hat{u}^{\mu}( x,t;\omega ),\hat{u}^{\mu}( x,t;\omega )) =(e^{-\mu ( x-\int_0^t{c( s ;\omega ,\mu ) ds} )},e^{-\mu ( x-\int_0^t{c( s ;\omega ,\mu ) ds} )})$ is a super-solution of (\ref{main-trans1-continuous}). Denote
$$(\overline{u}^{\mu}( x,t;\omega ) , \overline{v}^{\mu}( x,t;\omega )) =\min \{(u^*(t;\omega),v^*(t;\omega)),(\hat{u}^{\mu}( x,t;\omega ),\hat{u}^{\mu}( x,t;\omega ))\}. $$ Then $(\overline{u}^{\mu}( x,t;\omega ) , \overline{v}^{\mu}( x,t;\omega ))$ is a generalized super-solution of \eqref{main-trans1-continuous}.

\begin{lemma}\label{monotone-sup}
For $\omega\in\Omega_0$, we have that
\begin{align*}
 &u( x,t-t_0;\overline{u}^{\mu}( \cdot ,t_0;\omega),\overline{v}^{\mu}( \cdot ,t_0;\omega) ,\theta_{t_0}\omega) \le \overline{u}^{\mu}( x,t;\omega )\  \ \ and \\
 \ \ \ \  &v( x,t-t_0;\overline{u}^{\mu}( \cdot ,t_0;\omega),\overline{v}^{\mu}( \cdot ,t_0;\omega) ,\theta_{t_0}\omega) \le \overline{v}^{\mu}( x,t;\omega ) ,\ \ \forall x\in \mathbb{R},\ t\geq t_0,\  t_0\in\R.
\end{align*}
\end{lemma}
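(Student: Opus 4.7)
The plan is to write $(\overline{u}^\mu, \overline{v}^\mu)$ as the pointwise minimum of two ordered super-solution pairs of \eqref{main-trans1-continuous} and then invoke the comparison principle (Proposition \ref{comparison}) separately against each of them; taking the componentwise minimum of the two resulting upper bounds on the solution reproduces $(\overline{u}^\mu, \overline{v}^\mu)$.

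To carry this out, I would first verify that $(u^*(t;\omega), v^*(t;\omega))$ is an exact, and hence super-, solution of the cooperative system \eqref{main-trans1-continuous}. Since this pair is spatially constant, $H$ annihilates it and the terms $v^*(t;\omega) - v$ collapse, so the two equations reduce to the scalar equations \eqref{single specie 1} for $u^*$ and \eqref{single specie 2} for $v^*$, which hold by the definitions recalled after \textbf{(H1)}. The super-solution property of $(\hat{u}^\mu, \hat{u}^\mu)$ was already established in the display directly preceding the lemma, using both parts of \textbf{(H3)}, namely $b_i(\theta_t\omega) \geq c_i(\theta_t\omega)$ and the inequality comparing $a_1 - c_1 v^*$ with $a_2 - 2 c_2 v^* + b_2 v^*$. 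With both super-solutions in hand, I would apply Proposition \ref{comparison}(1) on an arbitrary finite interval $[t_0, t_0 + T)$, after the standard time shift that converts the problem into one for $\theta_{t_0}\omega$: the actual solution $(u(\cdot, t - t_0; \overline{u}^\mu(\cdot, t_0;\omega), \overline{v}^\mu(\cdot, t_0;\omega), \theta_{t_0}\omega), v(\cdot, t - t_0; \cdots))$ plays the role of sub-solution, while $(u^*, v^*)$ and $(\hat{u}^\mu, \hat{u}^\mu)$ each in turn play the role of super-solution. This yields the four pointwise bounds $u \leq u^*$, $u \leq \hat{u}^\mu$, $v \leq v^*$, $v \leq \hat{u}^\mu$, whose componentwise minima are precisely $\overline{u}^\mu(x,t;\omega)$ and $\overline{v}^\mu(x,t;\omega)$; since $T$ is arbitrary, the inequalities extend to all $t \geq t_0$.

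The main obstacle I foresee is the hypothesis in Proposition \ref{comparison}(1) that both sub- and super-solutions take values in $[0, u^*(t;\omega)] \times [0, v^*(t;\omega)]$, the region where \eqref{main-trans1-continuous} is cooperative. The two super-solutions satisfy this by construction, since the truncation against $(u^*, v^*)$ is already built into $\overline{u}^\mu$ and $\overline{v}^\mu$, so in particular the initial datum $(\overline{u}^\mu(\cdot, t_0;\omega), \overline{v}^\mu(\cdot, t_0;\omega))$ lies in this box. To ensure the solution itself remains inside it, I would first run Proposition \ref{comparison}(1) with sub-solution $(0,0)$ and super-solution $(u^*, v^*)$ to pin $(u, v)$ between them on the time interval under consideration; after this preliminary bootstrap, the two main comparison arguments sketched above apply without further obstruction.
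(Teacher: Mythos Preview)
Your plan to compare the solution separately against the two building blocks $(u^*,v^*)$ and $(\hat u^{\mu},\hat u^{\mu})$ and then take the componentwise minimum is the natural idea, but the second comparison does not fit the hypotheses of Proposition~\ref{comparison}(1), and your justification of the box condition is mistaken. You write that ``the two super-solutions satisfy this by construction, since the truncation against $(u^*,v^*)$ is already built into $\overline u^{\mu}$ and $\overline v^{\mu}$''; however, in the step where you compare against $(\hat u^{\mu},\hat u^{\mu})$ the super-solution is $(\hat u^{\mu},\hat u^{\mu})$ itself, \emph{not} its truncation. Since $\hat u^{\mu}(x,t;\omega)=e^{-\mu(x-\int_0^t c\,ds)}\to\infty$ as $x\to-\infty$, this pair is neither bounded nor contained in $[0,u^*(t;\omega)]\times[0,v^*(t;\omega)]$. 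Both conditions are genuinely used in the proof of Proposition~\ref{comparison}(1): the mean-value coefficients $a_1=c-2+f_u$ and $b_2=c-2+g_v$ involve $f_u=a_1(\theta_t\omega)-2b_1 u-c_1(v^*-v)$ and $g_v=-b_2 u+\cdots$ evaluated at intermediate points between the sub- and super-solution, so when the super-solution is unbounded these coefficients are unbounded, no single exponential weight $c$ makes them nonnegative, and the quantity $p_0(\omega)$ in that proof is infinite. Moreover $g_u=b_2(\theta_t\omega)(v^*-v)$ becomes negative at intermediate values $v>v^*(t;\omega)$, so the cooperativity used to get $a_2\ge 0$ also breaks down.

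The paper therefore does not split into two smooth comparisons. It works directly with the bounded pair $(\overline u^{\mu},\overline v^{\mu})$ in integral form: it first records the differential super-solution inequalities for $(\hat u^{\mu},\hat u^{\mu})$, multiplies by $e^{Ct}$, integrates, and observes that the resulting \emph{integral} inequalities are inherited by the pointwise minimum $(\overline u^{\mu},\overline v^{\mu})$ (this is exactly the ``generalized super-solution'' notion introduced before Proposition~\ref{comparison}). With $Q_1=e^{Ct}(\overline u^{\mu}-u)$ and $Q_2=e^{Ct}(\overline v^{\mu}-v)$ one then reruns the argument of Proposition~\ref{comparison}(1); now all four functions lie in the cooperative box, the mean-value coefficients are bounded, a suitable $C$ exists, and the conclusion $Q_i\ge 0$ follows. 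If you want to salvage your splitting approach, you would have to either localize the comparison to the region $\{\hat u^{\mu}\le u^*\}$ and argue the inequality is trivial outside it, or replace $(\hat u^{\mu},\hat u^{\mu})$ by a bounded super-solution; either route requires extra work beyond a direct citation of Proposition~\ref{comparison}(1).
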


\begin{proof}
For any constant $C$, $ (\hat{U}( x,t;\omega),\hat{V}( x,t;\omega)) :=(e^{Ct}\hat{u}^{\mu}( x,t;\omega),e^{Ct}\hat{u}^{\mu}( x,t;\omega))$ satisfies
\begin{align*}
\partial _t\hat{U}( x,t;\omega) &=( \partial _t\hat{u}^\mu( x,t;\omega) +C\hat{u}^{\mu}( x,t;\omega)) e^{Ct}\\
&\ge H\hat{U}( x,t;\omega) +C\hat{U}( x,t;\omega) +e^{Ct}f(t,\hat{u},\hat{u},\omega),
\end{align*}
and
\begin{align*}
\partial _t\hat{V}( x,t;\omega) &=( \partial _t\hat{u}^\mu( x,t;\omega) +C\hat{u}^{\mu}( x,t;\omega)) e^{Ct}\\
&\ge H\hat{V}( x,t;\omega) +C\hat{V}( x,t;\omega) +e^{Ct}g(t,\hat{u},\hat{u},\omega).
\end{align*}
Hence,
$$
\hat{U}( x,t;\omega) \ge \hat{U}( x,t_0;\omega) +\int_{t_0}^t{( H\hat{U}( x,\tau ;\omega) +C\hat{U}( x,\tau ;\omega) +e^{C\tau}f(\tau,\hat{u},\hat{u},\omega)) d\tau},
$$
and
$$
\hat{V}( x,t;\omega) \ge \hat{V}( x,t_0;\omega) +\int_{t_0}^t{( H\hat{V}( x,\tau ;\omega) +C\hat{V}( x,\tau ;\omega) +e^{C\tau}g(\tau,\hat{u},\hat{u},\omega)) d\tau}.
$$
Denote $ (\overline{U}( x,t;\omega),\overline{V}( x,t;\omega)) :=(e^{Ct}\overline{u}^{\mu}( x,t;\omega),e^{Ct}\overline{v}^{\mu}( x,t;\omega))$. Then we also have
$$
\overline{U}( x,t;\omega) \ge \overline{U}( x,t_0;\omega) +\int_{t_0}^t{( H\overline{U}( x,\tau ;\omega ) +C\overline{U}( x,\tau ;\omega) +e^{C\tau}f(\tau,\overline{u},\overline{v},\omega)) d\tau},
$$
and
$$
\overline{V}( x,t;\omega) \ge \overline{V}( x,t_0;\omega) +\int_{t_0}^t{( H\overline{V}( x,\tau ;\omega ) +C\overline{V}( x,\tau ;\omega) +e^{C\tau}g(\tau,\overline{u},\overline{v},\omega)) d\tau}.
$$

Let $ Q_1( x,t;\omega) =e^{Ct}( \overline{u}^{\mu}( x,t;\omega) -u( x,t-t_0;\overline{u}^{\mu}( \cdot ,t_0;\omega),\overline{v}^{\mu}( \cdot ,t_0;\omega),\theta_{t_0}\omega))$ and $ Q_2( x,t;\omega) =e^{Ct}( \overline{v}^{\mu}( x,t;\omega) -v( x,t-t_0;\overline{u}^{\mu}( \cdot ,t_0;\omega),\overline{v}^{\mu}( \cdot ,t_0;\omega),\theta_{t_0}\omega))$. Then
\begin{align*} &Q_1( x,t;\omega) \\ &\ge Q_1( x,t_0;\omega) +\int_{t_0}^t{( HQ_1( x,\tau ;\omega) +a_1( x,\tau ;\omega)Q_1( x,\tau ;\omega)+b_1( x,\tau ;\omega)Q_2( x,\tau ;\omega) ) d\tau}, \end{align*}
and
\begin{align*} &Q_2( x,t;\omega) \\ &\ge Q_2( x,t_0;\omega) +\int_{t_0}^t{( HQ_2( x,\tau ;\omega) +a_2( x,\tau ;\omega)Q_1( x,\tau ;\omega)+b_2( x,\tau ;\omega)Q_2( x,\tau ;\omega) ) d\tau}, \end{align*}
where
\begin{align*}
 a_1( x,t;\omega)&=C+f_u(t,u^*_1,v^*_1,\omega),\ \   b_1( x,t;\omega)=f_v(t,u^*_1,v^*_1,\omega),\\
 a_2( x,t;\omega)&=g_u(t,u^*_2,v^*_2,\omega),\ \   b_2( x,t;\omega)=C+g_v(t,u^*_2,v^*_2,\omega).
\end{align*}
Since system \eqref{main-trans1-continuous} is cooperative, we know that $ b_1( x,t;\omega)\geq0$ and $ a_2( x,t;\omega)\geq0$. By the boundedness of $\bar{u}^\mu( x,t;\omega)$, $\bar{v}^\mu( x,t;\omega)$, $u(x,t-t_0;\overline{u}^{\mu}( \cdot ,t_0;\omega),\overline{v}^{\mu}( \cdot ,t_0;\omega) ,\theta_{t_0}\omega)$ and $v(x,t-t_0;\overline{u}^{\mu}( \cdot ,t_0;\omega),$ $\overline{v}^{\mu}( \cdot ,t_0;\omega) ,\theta_{t_0}\omega)$, we can choose $C>0$ such that $ b_2( x,t;\omega)\geq0$ and $ a_1( x,t;\omega)\geq0$ for all $t\ge t_0$, $x\in\mathbb{R}$ and a.e. $\omega\in\Omega$. By the arguments of Proposition \ref{comparison}, we have that
$$ Q_i( x,t;\omega) \ge Q_i( x,t_0;\omega) =0, \ \ \ i=1,2,$$
and hence for $\omega\in\Omega_0$, we have that $u( x,t-t_0;\overline{u}^{\mu}( \cdot ,t_0;\omega),\overline{v}^{\mu}( \cdot ,t_0;\omega) ,\theta_{t_0}\omega) \le \overline{u}^{\mu}( x,t;\omega )$ and $v( x,t-t_0;\overline{u}^{\mu}( \cdot ,t_0;\omega),\overline{v}^{\mu}( \cdot ,t_0;\omega) ,\theta_{t_0}\omega) \le \overline{v}^{\mu}( x,t;\omega ) ,\ \ \forall x\in \mathbb{R},\ t\geq t_0,\  t_0\in\R.$
\end{proof}

Next, we construct a sub-solution of (\ref{main-trans1-continuous}).  Let $\tilde{\mu}>0$ be such that $\mu<\tilde{\mu}<\min\{2\mu,{\mu}^*\}$ and $\omega\in\Omega_0$. Let $A_\omega$ and $d_\omega$ be given by Lemma \ref{sub-solution-lemma}, and let
$$ x_{\omega}( t ) =\int_0^t{c( s;\omega ,\mu )}ds+\frac{{\ln}d_{\omega}+{\ln}\tilde{\mu}-{\ln}\mu}{\tilde{\mu}-\mu}+\frac{A_{\omega}( t )}{\mu}. $$
Recall that $$ \tilde{u}( x,t, \omega) =e^{-\mu ( x-\int_0^t{c( s ;\omega ,\mu ) ds} )}-de^{( \frac{\tilde{\mu}}{\mu}-1 ) A_{\omega}( t ) -\tilde{\mu}( x-\int_0^t{c( s ;\omega ,\mu ) ds} )} $$
and
$$\tilde{v}( x,t, \omega)=\sigma e^{-\mu ( x-\int_0^t{c( s ;\omega ,\mu ) ds} )}h(t;\omega)-\sigma de^{( \frac{\tilde{\mu}}{\mu}-1 ) A_{\omega}( t ) -\tilde{\mu}( x-\int_0^t{c( s ;\omega ,\mu ) ds} )}h(t;\omega)$$
By calculation we have that for any given $t\in\mathbb{R}$,
\begin{align}
\label{supremum}
 &(\tilde{u}( x_{\omega}( t ) ,t,\omega ),\tilde{v}( x_{\omega}( t ) ,t,\omega ) )\nonumber\\
 =&(\sup\limits_{x\in \mathbb{R}}\tilde{u}( x,t,\omega ),\sup\limits_{x\in \mathbb{R}}\tilde{v}( x,t,\omega )
 )\nonumber\\
 =&(e^{-\mu ( \frac{\ln {d_{\omega}}}{\tilde{\mu}-\mu}+\frac{A_{\omega}( t )}{\mu} )}e^{ -\mu \frac{ \ln{\tilde{\mu}}-  \ln{\mu}}{\tilde{\mu}-\mu}}( 1-\frac{\mu}{\tilde{\mu}} ),\sigma h(t;\omega)e^{-\mu ( \frac{\ln {d_{\omega}}}{\tilde{\mu}-\mu}+\frac{A_{\omega}( t )}{\mu} )}e^{ -\mu \frac{ \ln{\tilde{\mu}}-  \ln{\mu}}{\tilde{\mu}-\mu}}( 1-\frac{\mu}{\tilde{\mu}} )) .
 \end{align}
Define
\begin{equation}
\begin{aligned}
&(\underline{u}^{\mu}( x,t;\theta _{t_0}\omega ),\underline{v}^{\mu}( x,t;\theta _{t_0}\omega ) )\nonumber\\
=&\left\{ \begin{array}{l} (\tilde{u}( x,t+t_0,\omega ),\tilde{v}( x,t+t_0,\omega )) ,\ \ \text{if\ }x\ge x_{\omega}( t+t_0 ) ,\\ 	 (\tilde{u}( x_{\omega}( t+t_0 ), t+t_0, \omega ),\tilde{v}( x_{\omega}( t+t_0 ), t+t_0, \omega )) ,\ \ \text{if\ }x\le x_{\omega}( t+t_0 ) .\\ \end{array} \right.
 \end{aligned}
 \end{equation}
It is clear that
\begin{align*} &(0,0)\\& <(\underline{u}^{\mu}( \cdot,t;\theta _{t_0}\omega ),\underline{v}^{\mu}( \cdot,t;\theta _{t_0}\omega )) \\& <(\overline{u}^{\mu}( \cdot,t;\theta _{t_0}\omega ),\overline{v}^{\mu}( \cdot,t;\theta _{t_0}\omega )) \le (u^*(t+t_0;\omega),v^*(t+t_0;\omega)) \end{align*}
for all $t,\ t_0\in \mathbb{R}$, and there exists $\tilde{\sigma}>0$ such that
\begin{align}
\label{super1}
\lim\limits_{x\rightarrow \infty} \sup\limits_{t\in\R, t_0\in \mathbb{R}} \frac{\underline{u}^{\mu}( x,t;\theta _{t_0}\omega )}{\overline{u}^{\mu}( x,t;\theta _{t_0}\omega )}=1 \ \ \
and \ \ \
\lim\limits_{x\rightarrow \infty} \sup\limits_{t\in\R, t_0\in \mathbb{R}} \frac{\underline{v}^{\mu}( x,t;\theta _{t_0}\omega )}{\overline{v}^{\mu}( x,t;\theta _{t_0}\omega )}=\tilde{\sigma}.
\end{align}
Note that by the similar arguments as in Lemma \ref{monotone-sup}, we can prove that
$$ u( x,t-t_0;\underline{u}^{\mu}( \cdot ,t_0; \omega) ,\underline{v}^{\mu}( \cdot ,t_0; \omega) ,\theta _{t_0}\omega ) \ge \underline{u}^{\mu}( x,t;\omega )  $$
and
$$ v( x,t-t_0;\underline{u}^{\mu}( \cdot ,t_0; \omega) ,\underline{v}^{\mu}( \cdot ,t_0; \omega) ,\theta _{t_0}\omega ) \ge \underline{v}^{\mu}( x,t;\omega )  $$
for $x\in\mathbb{R}$, $t\ge t_0$ and a.e. $\omega\in\Omega$.


Now we are in a position to prove the main Theorem.

\begin{proof}[Proof of Theorem \ref{exist-thm}]

 (i) By Lemma \ref{monotone-sup} we have that
$$ u( x,t-t_0;\overline{u}^{\mu}( \cdot ,t_0;\omega ) ,\overline{v}^{\mu}( \cdot ,t_0;\omega ) ,\theta _{t_0}\omega ) \le \overline{u}^{\mu}( x,t;\omega ) $$
It then follows that
$$ u( x,\tau_2-\tau_1;\overline{u}^{\mu}( \cdot ,-\tau_2;\omega) ,\overline{v}^{\mu}( \cdot ,-\tau_2;\omega) ,\theta _{-\tau_2}\omega) \le \overline{u}^{\mu}( x,-\tau_1;\omega) $$ for $x\in\R$ and $\tau_2>\tau_1$.
Then we get that
\begin{align*}
u&( x,t+\tau_1;u( \cdot,\tau_2-\tau_1;\overline{u}^{\mu}( \cdot ,-\tau_2;\omega) ,\overline{v}^{\mu}( \cdot ,-\tau_2;\omega) ,\theta _{-\tau_2}\omega),\\
&\ \ \ \ \ \ \ \ \ \ \ \ \ v( \cdot,\tau_2-\tau_1;\overline{u}^{\mu}( \cdot ,-\tau_2;\omega) ,\overline{v}^{\mu}( \cdot ,-\tau_2;\omega) ,\theta _{-\tau_2}\omega),\theta _{-\tau_1}\omega)\\
\leq &u( x,t+\tau_1;\overline{u}^{\mu}( \cdot ,-\tau_1;\omega),\overline{v}^{\mu}( \cdot ,-\tau_1;\omega),\theta _{-\tau_1}\omega)
\end{align*}
for $x\in \mathbb{R}$, $t\ge -\tau_1$, $\tau_2>\tau_1$, and hence
\begin{align*} u&( x,t+\tau_2 ;\overline{u}^{\mu}( \cdot ,-\tau_2 ;\omega) ,\overline{v}^{\mu}( \cdot ,-\tau_2 ;\omega) ,\theta _{-\tau_2}\omega)\\ \leq &u( x,t+\tau_1 ;\overline{u}^{\mu}( \cdot ,-\tau_1 ;\omega) ,\overline{v}^{\mu}( \cdot ,-\tau_1 ;\omega) ,\theta _{-\tau_1}\omega)\end{align*}
for $x\in \mathbb{R}$, $t\ge -\tau_1$, $\tau_2>\tau_1$.
Therefore $\lim\limits_{\tau\rightarrow\infty} u( x,t+\tau ;\overline{u}^{\mu}( \cdot ,-\tau ;\omega) ,\overline{v}^{\mu}( \cdot ,-\tau ;\omega) ,\theta _{-\tau}\omega)$ exists. Similarly, we can get that $\lim\limits_{\tau\rightarrow\infty} v( x,t+\tau ;\overline{u}^{\mu}( \cdot ,-\tau ;\omega) ,\overline{v}^{\mu}( \cdot ,-\tau ;\omega) ,\theta _{-\tau}\omega)$ exists. Define
\begin{equation}
\label{sationary1}
 U(x,t;\omega):=\underset{\tau\rightarrow\infty}{\lim}u( x,t+\tau ;\overline{u}^{\mu}( \cdot ,-\tau ;\omega) ,\overline{v}^{\mu}( \cdot ,-\tau ;\omega) ,\theta _{-\tau}\omega)
 \end{equation}
and
\begin{equation}
\label{sationary2}
 V(x,t;\omega):=\underset{\tau\rightarrow\infty}{\lim}v( x,t+\tau ;\overline{u}^{\mu}( \cdot ,-\tau ;\omega) ,\overline{v}^{\mu}( \cdot ,-\tau ;\omega) ,\theta _{-\tau}\omega)
 \end{equation}
for $x\in \mathbb{R}$, $t\in\R$, $\omega \in \Omega _0$. Then $ (U(x,t;\omega),V(x,t;\omega))$ is non-increasing in $x\in\R$ and by dominated convergence theorem we know that $ (U(x,t;\omega),V(x,t;\omega))$ is a solution of \eqref{main-trans1-continuous}.

We claim that, for every $\omega\in\Omega_0$,
\begin{align}
\label{uniformly}
&\underset{x\rightarrow -\infty}{\lim}(U( x+\int_0^t{c(s;\omega ,\mu ) ds},t;\omega ),V( x+\int_0^t{c(s;\omega ,\mu ) ds},t;\omega )) \nonumber\\
&=(u^*(t;\omega),v^*(t;\omega))\ \ \ \ \text{uniformly\ in}\ t\in \mathbb{R}.
\end{align}
In fact, fix any $\omega\in\Omega_0$, let
$ \hat{x}_\omega=\frac{\ln d_{\omega}+\ln{\tilde{\mu}}-\ln\mu}{\tilde{\mu}-\mu}-\frac{\lVert A_{\omega} \rVert _{\infty}}{\mu},$
it follows from $ (\underline{u}^{\mu}( x,t;\omega) ,\underline{v}^{\mu}( x,t;\omega)) \le (U( x, t; \omega),V( x, t; \omega)) $, \eqref{supremum} and $\inf\limits_{t\in\R} {h(t;\omega)}>0$ that
$$ 0<( 1-\frac{\mu}{\tilde{\mu}}) e^{-\mu(\frac{\ln d_{\omega}+\ln{\tilde{\mu}}-\ln\mu}{\tilde{\mu}-\mu}+\frac{\lVert A_{\omega} \rVert _{\infty}}{\mu})}\le \inf\limits_{t\in \mathbb{R}}U(\hat{x}_\omega+\int_0^t{c( s ;\omega ,\mu) ds, t; \omega} ),  $$
and
$$0<\sigma\inf\limits_{t\in \mathbb{R}} h(t;\omega)( 1-\frac{\mu}{\tilde{\mu}}) e^{-\mu(\frac{\ln d_{\omega}+\ln{\tilde{\mu}}-\ln\mu}{\tilde{\mu}-\mu}+\frac{\lVert A_{\omega} \rVert _{\infty}}{\mu})}   \leq \inf\limits_{t\in \mathbb{R}}{V(\hat{x}_\omega+\int_0^t{c( s ;\omega ,\mu) ds, t; \omega})}.$$
Let $ (u_0( x),v_0(x)) \equiv (u_0,v_0):=(\inf\limits_{t\in \mathbb{R}}U(\hat{x}_\omega+\int_0^t{c(s ;\omega ,\mu) ds, t; \omega}),\inf\limits_{t\in \mathbb{R}}V(\hat{x}_\omega+\int_0^t{c(s ;\omega ,\mu) ds, t; \omega}))$, and $(\tilde{u}_0(x),\tilde{v}_0(x))$ be uniformly continuous such that $(\tilde{u}_0(x),\tilde{v}_0(x))=(u_0(x),v_0(x))$ for $x<\hat{x}_\omega-1$ and $(\tilde{u}_0(x),\tilde{v}_0(x))=(0,0)$ for $x\geq \hat{x}_\omega$.
Then $\underset{n\rightarrow \infty}{\lim}(\tilde{u}_0( x-n),\tilde{v}_0( x-n)) =(u_0( x),v_0( x))$ locally uniformly in $x\in \mathbb{R}.$
Note that by (H2), we have
$$ \underset{t\rightarrow \infty}{\lim}(u( x,t;u_0,v_0,\theta _{t_0}\omega)-u^*(t+t_0;\omega),v( x,t;u_0,v_0,\theta _{t_0}\omega)-v^*(t+t_0;\omega)) =(0,0)$$
uniformly in $t_0\in \mathbb{R}$ and $x\in \mathbb{R}$.
Then for any $\epsilon>0$, there is $T:=T(\epsilon)>0$ such that $$ u^*(t_0+T;\omega)>u( x,T;u_0,v_0,\theta _{t_0}\omega) >u^*(t_0+T;\omega)-\epsilon, \ \ \forall t_0\in \mathbb{R},\ x\in \mathbb{R}. $$
Therefore, from the definition of $c(t,\omega,\mu)$ we know that, \begin{align*} &u^*(t_0+T;\omega)\\&>u( x+\int_0^T{c( s ;\theta_{t_0}\omega ,\mu) ds},T;u_0,v_0,\theta _{t_0}\omega) >u^*(t_0+T;\omega)-\epsilon, \,\,\,\,\forall t_0\in \mathbb{R},\,\,x\in \mathbb{R}. \end{align*}
By Proposition \ref{convergence-lemma}, there is $N:=N(\epsilon)>1$ such that
\begin{align*} &u^*(t_0+T;\omega)\\ &>u( \int_0^T{c( s ;\theta_{t_0}\omega ,\mu ) ds },T;\tilde{u}_0(\cdot-N),\tilde{v}_0(\cdot-N),\theta _{t_0}\omega) >u^*(t_0+T;\omega)-2\varepsilon, \,\,\,\,\forall t_0\in \mathbb{R}. \end{align*}
That is,
\begin{align*} &u^*(t_0+T;\omega)\\&>u( \int_0^T{c( s ;\theta_{t_0}\omega ,\mu ) ds}-N,T;\tilde{u}_0(\cdot),\tilde{v}_0(\cdot),\theta _{t_0}\omega) >u^*(t_0+T;\omega)-2\varepsilon, \,\,\,\,\forall t_0\in \mathbb{R}. \end{align*}
Note that
$$ U( x+\int_0^{t-T}{c( s;\omega ,\mu) ds},t-T;\omega ) \ge \tilde{u}_0( x), \ \ \forall t\in \mathbb{R},\ x\in \mathbb{R},$$
$$ V( x+\int_0^{t-T}{c( s;\omega ,\mu) ds},t-T;\omega ) \ge \tilde{v}_0( x), \ \ \forall t\in \mathbb{R},\ x\in \mathbb{R},$$
and
$$ \int_0^t{c( s ;\omega ,\mu) ds}=\int_0^T{c(s ;\theta _{t-T}\omega ,\mu) ds}+\int_0^{t-T}{c( s ;\omega ,\mu ) d s}. $$
Then we get
\begin{align*}
\label{uniformly1}
u^*(t;\omega)&>U( x+\int_0^t{c( s ;\omega ,\mu ) ds},t;\omega)
 \\&=u( x+\int_0^T{c( s ;\theta_{t-T}\omega ,\mu ) ds},T;U( \cdot+\int_0^{t-T}{c( s;\omega ,\mu ) ds},t-T;\omega), \\
 & \ \ \ \ \ \ \ \ \ \ \ \ \ \ \ \ \ \ \ \ \ \ \ \ \ \ \ \  \ \ \ \ \ \ \ \ \ \ \  \ \ \ V( \cdot+\int_0^{t-T}{c( s;\omega ,\mu ) ds},t-T;\omega), \theta_{t-T}\omega ) \\&>u^*(t;\omega)-2\epsilon, \ \ \ \ \ \ \forall t\in \mathbb{R},\ x\le -N,
\end{align*}
and hence $\underset{x\rightarrow -\infty}{\lim}U( x+\int_0^t{c(s;\omega ,\mu ) ds},t;\omega )=u^*(t;\omega)$ uniformly in $t\in \mathbb{R}$.
Similarly, we can derive $\underset{x\rightarrow -\infty}{\lim}V( x+\int_0^t{c(s;\omega ,\mu ) ds},t;\omega )=v^*(t;\omega)$ uniformly in $t\in \mathbb{R}$. Thus \eqref{uniformly} follows.

Note that by  \eqref{super1} we have that, for every $\omega\in\Omega_0$,
$$ \lim\limits_{x\rightarrow \infty}\sup\limits_{t\in \mathbb{R}} U( x+\int_0^t{c( s ;\omega ,\mu ) ds},t;\omega )=0\ \ \
\mbox{and}\ \ \
\lim\limits_{x\rightarrow \infty}\sup\limits_{t\in \mathbb{R}} V( x+\int_0^t{c( s ;\omega ,\mu ) ds},t;\omega )=0.$$

Set
$$(\tilde{ \varPhi} (x, t;\omega),\tilde{ \varPsi} (x, t;\omega)) =(U( x+\int_0^t{c( s;\omega ,\mu ) ds,t;\omega} ),V( x+\int_0^t{c( s;\omega ,\mu ) ds,t;\omega} ))$$
and
$$(\varPhi(x,\omega),\varPsi(x,\omega))=(\tilde{ \varPhi} (x, 0;\omega),\tilde{ \varPsi} (x, 0;\omega)) . $$
We now claim that $(\tilde{ \varPhi} (x, t;\omega),\tilde{ \varPsi} (x, t;\omega))$ is stationary
ergodic in $t$, that is,  for a.e. $\omega\in\Omega$,
\begin{equation*}
\label{stationary ergodic}
(\tilde{ \varPhi} ( x,t;\omega ),\tilde{ \varPsi} ( x,t;\omega )) =(\tilde{\varPhi}( x,0;\theta _t\omega ),\tilde{\varPsi}( x,0;\theta _t\omega )).
\end{equation*}
In fact, note that for $\omega\in\Omega$,
\begin{align}
\label{speed1}
&\int_{-\tau}^t{c( s;\omega ,\mu )}ds\nonumber\\
&=\int_{-\tau}^t{\frac{e^{\mu}+e^{-\mu}-2+a_1( \theta _s\omega )-c_1( \theta _s\omega )v^*(s;\omega)}{\mu}ds} \nonumber \\
&=\frac{e^{\mu}+e^{-\mu}-2}{\mu}( t+\tau ) +\int_{-\tau}^t{\frac{a_1( \theta _s\omega )-c_1( \theta _s\omega )v^*(s;\omega)}{\mu}ds}
\end{align}
and
\begin{align}
\label{speed2}
&\int_{-( t+\tau )}^0{c( s;\theta _t\omega ,\mu ) ds}\nonumber\\
&=\int_{-( t+\tau )}^0{\frac{e^{\mu}+e^{-\mu}-2+a_1( \theta _s\circ\theta _t\omega )-c_1( \theta _s\circ\theta _t\omega )v^*(s;\theta_t\omega)}{\mu}ds}\nonumber \\
&=\frac{e^{\mu}+e^{-\mu}-2}{\mu}( t+\tau ) +\int_{-( t+\tau )}^0{\frac{a_1( \theta _{s+t}\omega )-c_1( \theta _{s+t}\omega )v^*(s+t;\omega)}{\mu}ds} \nonumber \\
&=\frac{e^{\mu}+e^{-\mu}-2}{\mu}( t+\tau ) +\int_{-\tau}^t{\frac{a_1( \theta _s\omega )-c_1( \theta _s\omega )v^*(s;\omega)}{\mu}ds}.
\end{align}
Combining \eqref{speed1} with \eqref{speed2}, we derive $ \int_{-\tau}^t{c( s;\omega ,\mu ) ds}=\int_{-( t+\tau )}^0{c( s;\theta _t\omega ,\mu ) ds}$ for  $\tau \ge 0$ and $ t\in \mathbb{R} $.
Recall that $$ (\overline{u}^{\mu}( x,t;\omega ),\overline{v}^{\mu}( x,t;\omega )) =\min \left\{ (u^*(t;\omega),v^*(t;\omega)),(\hat{u}(x,t;\omega),\hat{u}(x,t;\omega)) \right\}$$
and $$(\hat{u}(x,t;\omega),\hat{u}(x,t;\omega))=(e^{-\mu ( x-\int_0^t{c( s ;\omega ,\mu ) ds} )},e^{-\mu ( x-\int_0^t{c( s ;\omega ,\mu ) ds} )}).$$
Then we have
\begin{align*}
\tilde{ \varPhi} &( x,t;\omega )\\
=&\lim\limits_{\tau \rightarrow \infty}u( x+\int_0^t{c( s;\omega ,\mu ) ds},t+\tau ;\overline{u}^{\mu}( \cdot ,-\tau ;\omega ) ,\overline{v}^{\mu}( \cdot ,-\tau ;\omega ) ,\theta _{-\tau}\omega )\\
=&\lim\limits_{\tau \rightarrow \infty}u( x,t+\tau ;\overline{u}^{\mu}( \cdot+\int_0^t{c( s;\omega ,\mu ) ds} ,-\tau ;\omega ) ,\overline{v}^{\mu}( \cdot+\int_0^t{c( s;\omega ,\mu ) ds} ,-\tau ;\omega ) ,\theta _{-\tau}\omega )\\
=&\lim\limits_{\tau \rightarrow \infty}u( x,t+\tau ;\overline{u}^{\mu}( \cdot ,-( t+\tau ) ;\theta _t\omega ) ,\overline{v}^{\mu}( \cdot ,-( t+\tau ) ;\theta _t\omega ) ,\theta _{-\tau}\omega )\\
=&\lim\limits_{\tau \rightarrow \infty}u( x,t+\tau ;\overline{u}^{\mu}( \cdot ,-( t+\tau ) ;\theta _t\omega ) ,\overline{v}^{\mu}( \cdot ,-( t+\tau ) ;\theta _t\omega ) ,\theta _{t-(t+\tau)}\omega )\\
=&\lim\limits_{\tau \rightarrow \infty}u( x,\tau ;\overline{u}^{\mu}( \cdot ,-\tau ;\theta _t\omega ) ,\overline{v}^{\mu}( \cdot ,-\tau ;\theta _t\omega ) ,\theta _{t-\tau}\omega )\\
=&\tilde{\varPhi} ( x,0;\theta _t\omega ).
\end{align*}
Similarly, we can get $\tilde{ \varPsi} ( x,t;\omega )=\tilde{\varPsi} ( x,0;\theta _t\omega )$, and hence $(\tilde{ \varPhi} ( x,t;\omega ),\tilde{ \varPsi} ( x,t;\omega ))=(\tilde{\varPhi}( x,0;\theta _t\omega ),$ \\ $\tilde{\varPsi}( x,0;\theta _t\omega ))$.
The claim  thus follows and we get the desired random profile $(\varPhi(x,\omega),\varPsi(x,\omega))$.

 (ii)
Let
\begin{align*}
c_*&( \omega ) \\=&\sup\{ c: \limsup_{|i|\le ct,t\to\infty} [ | u_i( t;u^0,v^0,\theta _s\omega ) -u^*(t+s;\omega)|+| v_i( t;u^0,v^0,\theta _s\omega ) -v^*(t+s;\omega)| ] \\
&\ \ \ \ \ \ \ \ \ \   =0\ \ \ \mathrm{ uniformly\ in\ } s\in\R \ \mathrm{for\ all\ }(u^0,v^0)\in l_{0}^{\infty}( \mathbb{Z} )\times l_{0}^{\infty}( \mathbb{Z} )\},
\end{align*}
where
$$
l_{0}^{\infty}( \mathbb{Z} ) =\{ u=\{ u_i \} _{i\in \mathbb{Z}}\in l^{\infty}( \mathbb{Z} ) \ :\ u_i\ge 0\ \text{for\ all\ }i\in \mathbb{Z},\ u_i=0\ \text{for\ }| i |\gg 1,\ \{ u_i \} \ne 0 \} .$$
Recall that
$$
\underline{\lambda} =\underset{t-s\rightarrow \infty}{\lim\text{inf}}\frac{1}{t-s}\int_s^t{( a_1( \theta _{\tau}\omega ) -c_1( \theta _{\tau}\omega ) v^*( \tau ;\omega ) )}d\tau,
$$
and
$$
 c_0:=\inf \limits_{\mu>0}\frac{e^{\mu}+e^{-\mu}-2+\underline{\lambda}}{\mu}.
$$

 We claim that $c_*(\omega)=c_0$ for $\omega\in\Omega_0$. In fact, we consider
 \begin{align}
 \label{one}
 \dot{u}_i(t) =u_{i+1}( t ) -2u_i( t) +u_{i-1}( t)+u_i( t)(a_1(\theta _t\omega)-c_1(\theta _t\omega)v^*(t;\omega)- b_1(\theta _t\omega)u_i( t))
 \end{align}
For any $u^0\in l^{\infty,+}(\Z)$, let $u^-(t;u^0,\omega)$ be the solution of \eqref{one} with $u^-(0;u^0,\omega)=u^0$. By comparison principle, for any $(u^0,v^0)\in l^{\infty,+}(\Z) \times l^{\infty,+}(\Z)$, we have
 \begin{align}
 \label{two}
 u_i(t;u^0,v^0,\omega)\geq u_i^-(t;u^0,\omega),\ \ \forall t\geq 0.
 \end{align}
By \cite[Remark 1.1 (1)]{CaGa19},  for any $c(\omega)$ with $0<c(\omega)<c_0$,
$$
\underset{s\in\mathbb{R},|i|\le c(\omega)t,t\rightarrow \infty}{\lim\inf}u_i^-(t;u^0,\theta_s\omega)>0.
$$
Together with \eqref{two}, we then have
$$
\underset{s\in\mathbb{R},|i|\le c(\omega)t,t\rightarrow \infty}{\lim\inf}u_i(t;u^0,v^0,\theta_s\omega)>0.
$$
Then by Lemma \ref{speed}, for any $0<c<c(\omega)$,
$$\limsup_{|i|\le ct,t\to\infty} [|u_i(t;u^0,v^0,\theta_s\omega)-u^*(t+s;\omega)|+|v_i(t;u^0,v^0,\theta_s\omega)-v^*(t+s;\omega)|]=0$$ uniformly in $s\in\R$.
which implies that $c_*(\omega)\geq c_0$.

Assume that $c_*(\omega)> c_0$ for some $\omega\in\Omega_0$.  Fix  $\gamma$, $c^{'}$ and $c^{''}$ such that
$$
c_0<\gamma< c^{'}< c^{''}<c_*(\omega).
$$
Observe that $c_0>0$. For any $(u^0,v^0)\in l_0^\infty(\Z)\times l_0^\infty(\Z)$,
\begin{align}
\label{three}
\limsup_{|i|\le c^{''}t,t\to\infty} [|u_i(t;u^0,v^0,\theta_s\omega)-u^*(t+s;\omega)|+|v_i(t;u^0,v^0,\theta_s\omega)-v^*(t+s;\omega)|]=0
\end{align}
uniformly in $s\in\R$.

Let $(u_i( t;\omega ),v_i( t;\omega )) =({\varPhi} ( i-\int_0^t{c( s;\omega) ds},\theta _t\omega),{\varPsi}( i-\int_0^t{c( s;\omega) ds},\theta _t\omega))$ be as in (i) with $\bar c_{\inf}=\gamma$. Let
$$
u^s_i=\Phi(i-[\int_0^s c(\tau;\omega)d\tau],\theta_s\omega),\ \ v^s_i=\Psi(i-[\int_0^s c(\tau;\omega)d\tau],\theta_s\omega), \quad \forall\,\, s\in\R.
$$
By (i), there is $(u^0,v^0)\in l_0^\infty(\Z)\times l_0^\infty(\Z)$ such that $$(u^0,v^0)\leq(u^s,v^s),\ \ \forall s\in\R.$$
Hence
$$
u_i(t;u^0,v^0,\theta_s\omega)\leq u_i(t;u^s,v^s,\theta_s\omega)
$$
and
$$
v_i(t;u^0,v^0,\theta_s\omega)\leq v_i(t;u^s,v^s,\theta_s\omega)$$
for $i\in\Z$, $s\in\R$ and $t\ge 0$. This together with \eqref{three} implies that
\begin{align}
\label{four}
\limsup_{|i|\le c^{''}t,t\to\infty} [|u_i(t;u^s,v^s,\theta_s\omega)-u^*(t+s;\omega)|+|v_i(t;u^s,v^s,\theta_s\omega)-v^*(t+s;\omega)|]=0
\end{align}
uniformly in $s\in\R$.

Note that
$
\int_0^{t+s}{c( \tau ;\omega ) d\tau}=\int_0^s{c( \tau ;\omega ) d\tau}+\int_0^t{c( \tau ;\theta _s\omega ) d\tau}$. By (i) again, we have
\begin{align*}
&u_i(t;u^s,v^s,\theta_s\omega)\\&=\Phi(i-\int_0^t c(\tau;\theta_s\omega)d\tau-[\int_0^s c(\tau;\omega)d\tau)],\theta_{t+s}\omega)\le \Phi(i-\int_0^{t+s} c(\tau;\omega)d\tau,\theta_{t+s}\omega),
\end{align*}
and
\begin{align*}
&v_i(t;u^s,v^s,\theta_s\omega)\\&=\Psi(i-\int_0^t c(\tau;\theta_s\omega)d\tau-[\int_0^s c(\tau;\omega)d\tau)],\theta_{t+s}\omega)\le \Psi(i-\int_0^{t+s} c(\tau;\omega)d\tau,\theta_{t+s}\omega).
\end{align*}
Then
\begin{equation}
\label{five}
\limsup_{i\ge (c^{''}-c^{'})(t+s)+\int_0^{t+s} c(\tau;\omega) d\tau,t\to\infty}[u_i(t;u^s,v^s,\theta_s\omega)+v_i(t;u^s,v^s,\theta_s\omega)]=0
\end{equation}
uniformly in $s\in\R$. It follows from \eqref{four} and \eqref{five} that
$$
\bar c_{\inf}\ge c^{'}>\gamma,
$$
which is a contradiction.
Therefore,  $c_*(\omega)=c_0$.

Suppose that $(u(t;\omega),v(t;\omega))=\{(u_i(t;\omega),v_i(t;\omega))\}_{i\in\mathbb{Z}}$ with $(u_i( t;\omega ),v_i( t;\omega )) =({\varPhi} ( i-\int_0^t{c( s;\omega) ds},\theta _t\omega),$ ${\varPsi} ( i-\int_0^t{c( s;\omega) ds},\theta _t\omega))$ is a random transition front of \eqref{main-eqn-trans1} connecting $(u^*(t;\omega),v^*(t;\omega))$ and $(0,0)$. We prove that its least mean speed $\overline{c}_{\inf}\geq c_0$.  Observe that
$\inf\limits_{x\le z}\inf\limits_{s\in \mathbb{R}}\varPhi ( x,\theta _s\omega ) >0$ and $\inf\limits_{x\le z}\inf\limits_{s\in \mathbb{R}}\varPsi ( x,\theta _s\omega ) >0$ for all $z\in \mathbb{R}$.
Therefore, we can choose $(u^0_\omega,v^0_\omega)\in l_{0}^{\infty}(\mathbb{Z})\times l_{0}^{\infty}(\mathbb{Z})$ such that
$ (u^0_\omega,v^0_\omega)\leq (\varPhi(x,\theta_s\omega),\varPsi(x,\theta_s\omega))$ for all $s\in\R$.
Let $0<\epsilon\ll 1$.
Then by $c_*( \omega )=c_0$ and the comparison principle, we have that
$$\limsup\limits_{t\rightarrow \infty}\sup\limits_{s\in \mathbb{R}} [ | u_{[ ( c_0-\epsilon ) t ]}( t;u^0_\omega,v^0_\omega,\theta _s\omega ) -u^*(t+s;\omega)|+| v_{[ ( c_0-\epsilon ) t ]}( t;u^0_\omega,v^0_\omega,\theta _s\omega ) -v^*(t+s;\omega)| ]=0,$$ and
\begin{align*}
&\liminf\limits_{t\rightarrow \infty}\inf\limits_{s\in \mathbb{R}}\{u_{[ ( c_0-\epsilon ) t ]}( t;u^0_\omega,v^0_\omega,\theta _s\omega )+v_{[ ( c_0-\epsilon ) t ]}( t;u^0_\omega,v^0_\omega,\theta _s\omega )\}\\
&\le \liminf\limits_{t\rightarrow \infty}\inf\limits_{s\in \mathbb{R}}\{u_{[ ( c_0-\epsilon ) t ]}( t;\varPhi ( \cdot ,\theta _s\omega ) ,\varPsi ( \cdot ,\theta _s\omega ) ,\theta _s\omega )+v_{[ ( c_0-\epsilon ) t ]}( t;\varPhi ( \cdot ,\theta _s\omega ) ,\varPsi ( \cdot ,\theta _s\omega ) ,\theta _s\omega )\}\\
&=\liminf\limits_{t\rightarrow \infty}\inf\limits_{s\in \mathbb{R}}\{\varPhi ( [ ( c_0-\epsilon ) t ] -\int_0^t{c( \tau ;\theta _s\omega ) d\tau ,\theta _{t+s}\omega} )+\varPsi ( [ ( c_0-\epsilon ) t ] -\int_0^t{c( \tau ;\theta _s\omega ) d\tau ,\theta _{t+s}\omega} )\}.\\
\end{align*}
Together with
$
\int_0^{t+s}{c( \tau ;\omega ) d\tau}=\int_0^s{c( \tau ;\omega ) d\tau}+\int_0^t{c( \tau ;\theta _s\omega ) d\tau},
$ we know that there is a $M(\omega)$ such that
$
  ( c_0-\epsilon ) t \leq\int_0^{t+s}{c( \tau ;\omega ) d\tau}-\int_0^s{c( \tau ;\omega ) d\tau}+M(\omega)$ for all $t>0$,  $s\in\R$. Hence,
$$\overline{c}_{\inf}=
\liminf\limits_{t\rightarrow \infty}\inf\limits_{s\in \mathbb{R}}\frac{\int_0^{t+s}{c( \tau ;\omega ) d\tau}-\int_0^s{c( \tau ;\omega ) d\tau}}{t}\geq c_0-\epsilon.
$$
By the arbitrariness of $\epsilon>0$, we get $\overline{c}_{\inf}\geq c_0$.
\end{proof}


\end{document}